\numberwithin{equation}{section}
\theoremstyle{plain}
\newtheorem{thm}{Theorem}[section]
\newtheorem*{thm*}{Theorem}
\newtheorem{coro}[thm]{Corollary}
\newtheorem{lemma}[thm]{Lemma}
\theoremstyle{definition}
\newtheorem{deff}[thm]{Definition}
\theoremstyle{remark}
\newtheorem{rema}[thm]{Remark}
\newcommand\beq{\begin{equation}}
	\newcommand\eeq{\end{equation}}
\newcommand\tbtmat[4]{\left(\begin{smallmatrix}{#1} & {#2} \\ {#3} & {#4}\end{smallmatrix}\right)}
\newcommand\tbtMat[4]{\left(\begin{matrix}{#1} & {#2} \\ {#3} & {#4}\end{matrix}\right)}
\newcommand\slZ{\mathrm{SL}_2(\mathbb{Z})}
\begin{document}
	
	\title[Congruence for $4$-colored generalized Frobenius partitons]{Congruence families modulo powers of $7$ for $4$-colored generalized Frobenius partitions}

	\author{Kang-Yu Wang}
	\address{Department of Mathematics, Shanghai Normal University, Shanghai, People's Republic of China}
	\email{ 1000570883@smail.shnu.edu.cn }

	\author{Yi-Ning Wang}
	\address{Department of Mathematics, Shanghai Normal University, Shanghai, People's Republic of China}
	\email{ 1000570850@smail.shnu.edu.cn }
	
	\thanks{
		This work was supported by the National Key R\&D Program of China (\#2024YFA1014500) and the National Natural Science Foundation of China (\#12401438).
	}
	
	\subjclass[2020]{Primary 11P83, Secondary 05A17; 11F33}
	
	\keywords{Partition congruences, Frobenius partitions, Modular functions}
	
	\begin{abstract}
		In 2012, Peter Paule and Cristian-Silviu Radu proved an infinite family of Ramanujan type congruences for $2$-colored Frobenius partitions $c\phi_2$ introduced by George Andrews. Recently, Frank Garvan, James Sellers and Nicolas Smoot showed that this family of congruences is equivalent to the family of congruences for $(2,0)$-colored Frobenius partitions $c\psi_{2,0}$ introduced by Brian Drake and by Yuze Jiang, Larry Rolen and Michael Woodbury for the general case. Motivated by Garvan, Sellers and Smoot's work, Rong Chen and Xiao-Jie Zhu found modular transformations relating the $c\psi_{k,\beta}$ for fixed $k$ and varying $\beta$. As an example, they proved a family of congruences for $c\psi_{3,1/2}$ following Paule and Radu's work and then proved the equivalence between $c\psi_{3,1/2}$ and $c\phi_3=c\psi_{3,3/2}$. In the present paper, we give a new example of Chen and Zhu's framework for $c\psi_{4,\beta}$. Our proof is considerably simpler.

		% We prove congruence families of $c\psi_{4,\beta}$ modulo $7$ for $\beta =0,1,2$. Specifically, we deal with the case $\beta =1$ by following Paule and Radu's method, and then transform the result of $\beta =1$ into the results of $\beta \in \{ 0,2\}$ based on Chen and Zhu's work.
	\end{abstract}

	\maketitle
	
	\section{Introduction}
	
	In his 1984 AMS Memoir, Andrews \cite{An84} defined the family of $k$-colored generalized Frobenius partition functions $c\phi_k(n)$. He proved a variety of Ramanujan-type congruences satisfied by these $c\phi_k(n)$ such as
	\begin{align}
		\label{r:cphi2}
		c\phi_2(5n+3)\equiv 0 \pmod{5}.
	\end{align}
	The higher-power generalization of \eqref{r:cphi2}
	\begin{align}
		\label{r:cphi2a}
		c\phi_2(5^\alpha n+\delta_\alpha)\equiv 0 \pmod{5^\alpha},
	\end{align}
	where $12\delta_\alpha\equiv 1 \pmod{5^\alpha}$ was conjectured by Sellers \cite{se94} in 1994 and proved by Paule and Radu \cite{Peter} in 2012.
	
	In 2011, Baruah and Sarmah \cite{basa11} established the following expressions for the generating function of $c\phi_4(n)$
	\begin{align}
		\label{r:cphi4g}
		\sum_{n=0}^\infty c\phi_4(n)q^n=q^{1/6}\frac{\eta(4\tau)^{15}}{\eta(8\tau)^6\eta(2\tau)^6\eta(\tau)^4}
		+12q^{1/6}\frac{\eta(8\tau)^2\eta(4\tau)^3}{\eta(2\tau)^2\eta(\tau)^4},
		%  \footnote{In \cite{basa11} they used an equivalent $q$-series formulation.}
	\end{align}
	where $q:=\mathrm{e}^{2\pi \mathrm{i} \tau}$ and
	$$
	\eta(\tau)=q^{1/24}\prod_{j=1}^\infty (1-q^j)
	$$
	is the usual Dedekind eta function. Then the Ramanujan-type congruence of $c\phi_4(n)$ modulo $7$
	\begin{align}
		\label{r:cphi4}
		c\phi_4(7n+6)\equiv 0 \pmod{7}
	\end{align}
	was found by Zhang and Wang \cite[Theorem 1]{cpsi42} in 2017, and appeared earlier in partial form in \cite{chwaya17,lin14}. One reason why the higher-power generalization of \eqref{r:cphi4} has not been found in the literature may be that the generating function of $c\phi_4(n)$ in \eqref{r:cphi4g} appears to involve modular functions on the curve $X_0(56)$ when taken modulo power of $7$.
	
	This paper illustrates, by means of a new example, how the recent work of Chen and Zhu \cite{ncpsi} can serve as a heuristic tool for proving congruence families modulo powers of $7$ for $c\phi_4(n)$.
	
	Recently, Jiang, Rolen, and Woodbury \cite{Yu22} considered $(k,a)$-colored Frobenius partition functions $c\psi_{k,a}(n)$, which are natural generalizations of Andrews' $c\phi_k$ since $c\psi_{k,k/2}(n)=c\phi_k(n)$. Use the notation in \cite{Yu22}, Garvan, sellers, and smoot \cite{gss24} found that the property of $c\psi_{2,0}$ modulo powers of $5$ is similar to \eqref{r:cphi2a}. They obtained
	\begin{align}
		\label{r:cpsi2}
		c\psi_{2,\beta}(n)\equiv 0 \pmod{5^\alpha},
	\end{align}
	where $\beta=0,1$ and $3\beta^2-2\equiv 12 n \pmod{5^\alpha}$. (We used an equivalent condition on $n$ for uniformity, and for $\beta=1$ this is just \eqref{r:cphi2a}.) They also realized that the two families of congruences \eqref{r:cpsi2} bear a strange symmetry between each other via the application of certain Atkin-Lehner involution to their generating functions. Motivated by \cite{gss24}, Chen and Zhu \cite{ncpsi} found modular transformations relating $c\psi_{k,\beta}$ for fixed $k$ and varying $\beta$, which provides a more useful approach to proving Ramanujan-type congruences. As an example, they proved that
	\begin{align}
		\label{r:cpsi3}
		c\psi_{3,\beta}(n)\equiv 0 \pmod{5^\alpha},
	\end{align}
	where $\beta=1/2,3/2$ and $4\beta^2-6\equiv 24 n \pmod{5^{2\alpha}}$. (See \cite[Theorem 1.4]{ncpsi}.)
	
	The main result in this paper is the following.
	
	\begin{thm} \label{thm:cpsi4012}
		For $\alpha \in \mathbb{N}^*$, $\beta \in \{0,1,2\}$ and $n \in \mathbb{N}^*$ such that $3\beta^2-8 \equiv 24n \pmod{7^{2\alpha-1}}$, we
		have
		\begin{align}
			\label{r:cpsi4}
			c\psi_{4,\beta}(n) \equiv 0 \pmod{7^{\alpha}}.
		\end{align}
	\end{thm}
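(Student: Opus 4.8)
The plan is to follow the now-standard localization-at-the-cusp method for proving congruence families modulo prime powers, adapted via the Chen--Zhu transformations to make the arithmetic tractable. First I would use the relations of Chen and Zhu \cite{ncpsi} connecting $c\psi_{4,\beta}$ for $\beta\in\{0,1,2\}$ (together with $\beta=3$, which corresponds to $c\phi_4$) to express the three generating functions as components of a single vector-valued object transforming under a congruence subgroup. The hope---and the reason the authors claim their proof is simpler---is that passing from the naive $X_0(56)$ picture suggested by \eqref{r:cphi4g} to the $\beta$-twisted picture lowers the relevant level, ideally to something like $X_0(28)$ or even a genus-zero curve where a single Hauptmodul is available. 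So step one: pin down the exact modular curve and the Hauptmodul $t$, and write the generating function encoding the subprogression $n\equiv \delta_\alpha\pmod{7^{2\alpha-1}}$ as $t^{?}$ times a modular form of small weight.

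Second, I would set up the induction on $\alpha$ in the classical Paule--Radu / Atkin style. Let $L$ denote the $U_7$-type Hecke operator that extracts the arithmetic progression $24n\equiv 3\beta^2-8$, and let $f_\alpha$ be the modular function whose $q$-expansion has coefficients $c\psi_{4,\beta}(n)$ over that progression. The key is to exhibit a finitely generated module $\mathcal{M}$ over $\mathbb{Z}[t]$ (or $\mathbb{Z}_{(7)}[t]$) that is stable under $L$, and to show $L$ acts on a suitable \emph{basis} of $\mathcal{M}$ by a matrix whose entries, as polynomials in $t$, have $7$-adically large coefficients in a precise graded sense. Concretely one wants an estimate of the form: if $g\in 7^{\alpha}\cdot(\text{lattice spanned by }t,t^2,\dots)$ then $L(g)\in 7^{\alpha+1}\cdot(\text{same lattice})$, possibly after accounting for a bounded "initial segment'' correction. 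Establishing the base case $\alpha=1$ recovers \eqref{r:cphi4} (and its $\beta=0,1,2$ analogues) and amounts to a finite $q$-expansion check once the module and operator are in place.

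The main obstacle I anticipate is proving the divisibility-gain estimate for $L$ on the module $\mathcal{M}$---that is, controlling the $7$-adic valuations of the coefficients of the transition matrix of $U_7$ with respect to the chosen basis, uniformly in the degree in $t$. This is exactly the step where Paule--Radu needed delicate bounds and where Smoot's subsequent work systematized the "relations'' bookkeeping; it requires (i) choosing the basis of $\mathcal{M}$ cleverly so the matrix is as sparse and as $7$-divisible as possible, and (ii) verifying a handful of algebraic identities expressing $L(t^j \cdot(\text{basis element}))$ explicitly in the basis. I would expect the argument to hinge on one or two key identities among eta-quotients---derived from the Chen--Zhu transformation formulas---that express the image under $U_7$ of the fundamental generator in terms of $t$; once those are in hand, the $7$-adic gain follows by a valuation computation at the relevant cusps, and the induction closes.

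One subtlety worth flagging: the modulus in the hypothesis is $7^{2\alpha-1}$ on the progression but only $7^{\alpha}$ on the congruence, so the operator $L$ advances the "progression depth'' by $2$ while advancing the divisibility by $1$ at each step (this is why the $\beta=3/2$ case in \eqref{r:cpsi3} has $5^{2\alpha}$ versus $5^{\alpha}$, a parallel phenomenon). I would therefore index the induction carefully, tracking a pair of modular functions (an "even'' and an "odd'' level, analogous to the two sequences appearing in Paule--Radu) rather than a single one, and verify that the $U_7$-action interchanges them with the stated valuation gain. If the level really does drop to a genus-zero curve under the Chen--Zhu twist, all of this becomes a manageable finite computation, which is presumably the source of the promised simplification over a direct attack on $X_0(56)$.
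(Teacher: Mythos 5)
Your plan for the $\beta=1$ case matches the paper's Section 2 closely: $C\Psi_{4,1}(q)=4q^{-5/24}\eta_2^6/\eta_1^7$ is already a single eta quotient, so no Chen--Zhu twist is needed there; the authors run exactly the Paule--Radu induction you describe, with alternating sequences $L_{2\alpha+1}=U_7(AL_{2\alpha})$, $L_{2\alpha+2}=U_7(L_{2\alpha+1})$, a rank-three module $\mathbb{Z}[t]\oplus p_1\mathbb{Z}[t]\oplus p_2\mathbb{Z}[t]$ over the Hauptmodul $t=\eta_7^4/\eta_1^4$ of $X_0(7)$, and the $7$-adic valuation gain verified from a finite list of $U_7$-identities (the Appendix). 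Your bookkeeping of ``two progression steps per one power of $7$'' is also correct and is exactly why the spaces $X^{(0)}$ and $X^{(1)}$ come in a pair.

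The gap is in how you would handle $\beta=0,2$. A ``vector-valued object'' carrying all three components is not the mechanism here, and your sketch does not say how the $\beta=0$ and $\beta=2$ progressions would actually be extracted from it or why the $7$-adic gain would hold for them. The paper's device is concrete and two-step: (i) the Chen--Zhu transformation yields the single eta-quotient identity $C\Psi_{4,2}(q^2)-qC\Psi_{4,0}(q^2)=q^{1/3}\eta_1^6/\eta_2^7$, which interleaves the two remaining families on even and odd powers of $q$; and (ii) this eta quotient is the image of the $\beta=1$ generator $A=\eta_2^6\eta_{49}^7/(\eta_1^7\eta_{98}^6)$ under an explicit Atkin--Lehner involution $W_2$ of $\Gamma_0(98)$, which commutes with $U_7$ since $(7,2)=1$, so that $U_7(f)\vert W_2=U_7(f\vert W_2)$. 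Consequently the entire iteration transfers: $K_\alpha=L_\alpha\vert W_2$, and the divisibility $L_\alpha=7^{\lfloor(\alpha+1)/2\rfloor}f_\alpha$ with $f_\alpha$ in an integral lattice is inherited verbatim by $K_\alpha$, with no second Paule--Radu computation at all --- this transfer, not a level drop, is the advertised simplification. Without identifying either the combined generating function or the commutation of $U_7$ with the involution, your induction for $\beta=0,2$ has no starting point and no transfer mechanism, so as written the proposal establishes only one of the three cases of the theorem. (A minor correction: the working curve is $X_0(14)$ sitting as a degree-three cover over the genus-zero $X_0(7)$, not $X_0(28)$; the level does drop below $56$ as you hoped, but the price is the rank-three module rather than a single Hauptmodul.)
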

	\begin{rema}
		Letting $\beta=2$ in Theorem \ref{thm:cpsi4012}, we have the higher-power generalization of \eqref{r:cphi4}
		\begin{align*}
			c\phi_4(7^\alpha n+\delta_\alpha)\equiv 0 \pmod{7^{\lfloor\frac{\alpha+1}{2}\rfloor}},
		\end{align*}
		where $6\delta_\alpha\equiv 1 \pmod{7^\alpha}$.
	\end{rema}
	Define the generating function $C\Phi_{k,\beta}(q)$ as follows:
	$$
	C \Psi_{k,\beta}(q):=\sum_{n=0}^\infty c\psi_{k,\beta}(n)q^n.
	$$
	According to \cite[Theorem 2]{Yu22}, one can obtain that
	\begin{align}
		\label{r:c21n}
		C \Psi_{2,1}(q)=q^{1/12}\frac{\eta_2^5}{\eta_4^2 \eta_1^4},
	\end{align}
	\begin{align}
		\label{r:c31n}
		C \Psi_{3,1/2}(q)=3q^{-5/24}\frac{\eta_3^3}{\eta_1^4},
	\end{align}
	and
	\begin{align}
		\label{r:c41n}
		C \Psi_{4,1}(q)=4q^{-5/24}\frac{\eta_2^6}{\eta_1^7},
	\end{align}
	where $\eta_{k}$ denotes the shorthand notation  derived from Dedekind's eta function
	$$
	\eta_{k}:=\eta(k\tau).
	$$
	
	When the generating function can be modified to a modular function on a simple modular curve (usually of genus no more than 1 and with no more than 6 cusps), a typically straightforward method for proving Ramanujan-type congruences modulo powers of a prime, illustrated by the generating functions \eqref{r:c21n} -- \eqref{r:c41n}, is that given by Paule and Radu \cite{Peter}, where the approach for handling $c\psi_{2,1}$ was presented. The function $c\psi_{3,1/2}$ was subsequently treated by the same standard method and appeared in \cite{ncpsi}. In this paper we also employ this technique to prove a family of congruences for $c\psi_{4,1}$ modulo powers of $7$. The generating functions of $c\psi_{3,3/2}$, $c\psi_{4,0}$, and $c\psi_{4,2}$ appear to be intricate. Nevertheless, using \cite[Example 1.2]{ncpsi}\iffalse and \eqref{r:c21n} -- \eqref{r:c41n} \fi
	, one can derive that (noting the symmetry between \eqref{r:c21n} -- \eqref{r:c41n} and \eqref{r:c2n} -- \eqref{r:c4n})
	\begin{align}
		\label{r:c2n}
		C \Psi_{2,1}(q^4)+q C \Psi_{2,0}(q^4)=q^{1/3}\frac{\eta_2^5}{\eta_1^2 \eta_4^4},
	\end{align}
	\begin{align}
		\label{r:c3n}
		C \Psi_{3,3/2}(q^3)-q C \Psi_{3,1/2}(q^3)=q^{3/8}\frac{\eta_1^3}{\eta_3^4},
	\end{align}
	and
	\begin{align}
		\label{r:c4n}
		C \Psi_{4,2}(q^2)-q C \Psi_{4,0}(q^2)=q^{1/3}\frac{\eta_1^6}{\eta_2^7}.
	\end{align}
	The detailed proof of \eqref{r:c4n} is given in Section \ref{section:3} (see Lemma \ref{lemma012}). We note that \eqref{r:c2n} -- \eqref{r:c4n} can also be proved by \cite[Theorem 2]{Yu22} and modular form techniques such as the valence formula. The generating functions \eqref{r:c2n} -- \eqref{r:c4n} show that all families of congruences \eqref{r:cpsi2} -- \eqref{r:cpsi4} already fall within the standard approach.
	
	However, one can easily obtain the generating functions \eqref{r:c2n} -- \eqref{r:c4n} via the modular transformation introduced by Chen and Zhu \cite{ncpsi} and thereby link the proofs for each family of congruences in \eqref{r:cpsi2}, \eqref{r:cpsi3}, and \eqref{r:cpsi4} as presented in \cite{gss24}. We note that the generating functions \eqref{r:c21n} -- \eqref{r:c41n} and \eqref{r:c2n} -- \eqref{r:c4n} are associated via certain clear Atkin-Lehner involutions, respectively. The involution between \eqref{r:c21n} and \eqref{r:c2n} is an alternative to that presented in \cite{gss24} and also helpful for relating the two families of congruences \eqref{r:cpsi2} in a similar way. The involution between \eqref{r:c31n} and \eqref{r:c3n} yields a simple proof of \eqref{r:cpsi3} for $\beta=3/2$ whereas the proof in \cite{ncpsi} is more general. In this paper we prove \eqref{r:cpsi4} for $\beta=0,2$ by using the involution between \eqref{r:c41n} and \eqref{r:c4n}. \\
	
	This paper is organized as follows. In Section \ref{section:2}, we prove Theorem \ref{thm:cpsi4012} for $\beta=1$ following \cite{Peter}.
	In Section \ref{section:3}, we obtain some results for the Atkin-Lehner involution among the families of congruences in Theorem \ref{thm:cpsi4012} and then prove Theorem \ref{thm:cpsi4012} for $\beta=0,2$.

	\section{the congruence family of $c\psi_{4,1}$} \label{section:2}
	
	The proof relies on the 42 identities that appeared in Appendix, which can be verified by the valence formula from modular form theory. Garvan has written a MAPLE package called ETA which can prove these identities automatically. See
	\begin{align*}
		https ://qseries.org/fgarvan/qmaple/ETA/
	\end{align*}
	The tutorial for this see \cite{ga19}.
	Let
	\begin{align} \label{A=eta}
		A:=q^{-10} \frac{C \Psi_{4,1}(q)}{C \Psi_{4,1}(q^{49})}=
		\frac{\eta_2^6 \eta_{49}^7}{\eta_1^7 \eta_{98}^6},
	\end{align}
	and construct the function sequence $\{ L_{\alpha} \}_{\alpha=0}^\infty $:
	\begin{align*}
		L_0:=1 , \quad
		L_{2\alpha+1}:= U_7
		\big( A  L_{2\alpha}  \big)
		, \quad
		L_{2\alpha+2}:= U_7
		\big(  L_{2\alpha+1} \big),  \quad \quad \alpha \in \mathbb{N}.
	\end{align*}where we define $U_m (f) : \mathbb{H} \to \mathbb{C}$ as follows:
	\begin{align*}
		U_m (f)(\tau) &:= \frac{1}{m} \sum_{\lambda=0}^{m-1}
		f\Big( \frac{\tau+\lambda}{m} \Big), \quad  m \in \mathbb{N}^* , \ f : \mathbb{H} \to \mathbb{C} .
	\end{align*}

	By mathematical induction, for $\alpha \in \mathbb{N}^*$ we can easily prove that
	\begin{align}
		L_{2\alpha-1}
		&=\frac{1}{4q}
		\prod_{j=1}^\infty
		\frac{(1-q^{7j})^7}{(1-q^{14j})^6}
		\sum_{n=0}^\infty
		c \psi_{4,1} \bigg(7^{2\alpha-1}n + \lambda_{2\alpha-1} \bigg)
		q^n,  \label{L2alpha-1} \\
		L_{2\alpha}
		&=\frac{1}{4}
		\prod_{j=1}^\infty
		\frac{(1-q^{j})^7}{(1-q^{2j})^6}
		\sum_{n=0}^\infty
		c \psi_{4,1} \bigg(7^{2\alpha}n + \lambda_{2\alpha} \bigg)
		q^n, \label{L2alpha}
	\end{align}
	where \iffalse $\lambda_{\alpha}
	:= \min \{ n \in \mathbb{N}^* : -5 \equiv 24n \pmod{7^{\alpha}} \}$, and \fi
	\begin{align}
		\label{r:lam1}
		\lambda_{2\alpha-1}
		:= \frac{-5+11 \cdot 7^{2\alpha-1}}{24},
		\ 	\  \ \
		\lambda_{2\alpha}
		:= \frac{-5+5 \cdot 7^{2\alpha}}{24}, \ \ \forall \  \alpha \in \mathbb{N}^*.
	\end{align}

	To obtain another expression for $\{ L_{\alpha}  \}_{\alpha=1}^\infty $, let
	\begin{align}
		t:=     \frac{\eta_7^4}{\eta_1^4} ,
		\ \ \ \
		p_1:=   \frac{\eta_{14}^4\eta_1^4}{\eta_7^4\eta_2^4} ,
		\ \ \ \
		p_2:=   \frac{\eta_7^3\eta_1^3}{\eta_{14}^3\eta_2^3} -p_1.
		\label{t,p=eta}
	\end{align}

	\begin{lemma}[{\cite[Lemma 3.2]{chen7}}] \label{lemma2.0}
		Suppose $u=u(\tau)$, and $k$ is any integer. Then
		\begin{align*}
			U_7(ut^k)= \sum_{j=0}^{6} a_j(t)U_7(ut^{k+j-7}),
		\end{align*}
		where
		\begin{align*}
			a_j(t)=\sum_{l=1}^7 s(j,l) 7^{ \big\lfloor \frac{7l+j-4}{4} \big\rfloor } t^l  \ \ \ , j=0,1,\cdots,6,
		\end{align*}
		with certain integers $s(j,l)$.
	\end{lemma}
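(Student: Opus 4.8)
The plan is to recognize the asserted recurrence as a disguised form of a single degree-$7$ modular equation for the eta-quotient $t=\eta_7^4/\eta_1^4$, and then to pin down its coefficients by a finite $q$-expansion computation together with a $7$-adic refinement coming from the Atkin--Lehner involution. First I would reduce the statement. Writing $\sigma_\lambda\tau:=(\tau+\lambda)/7$ and $\beta_\lambda:=t\circ\sigma_\lambda$, and factoring $t^{k}=t^{k-7}\cdot t^{7}$, the claimed identity $U_7(ut^k)=\sum_{j=0}^6 a_j(t)U_7(ut^{k+j-7})$ becomes, on taking $v:=ut^{k-7}$ as an arbitrary test function,
\[
\tfrac{1}{7}\sum_{\lambda=0}^{6} v(\sigma_\lambda\tau)\,\beta_\lambda^{7}
=\sum_{j=0}^{6}a_j(t)\,\tfrac{1}{7}\sum_{\lambda=0}^{6} v(\sigma_\lambda\tau)\,\beta_\lambda^{j}.
\]
Since $v$ is arbitrary, this is equivalent to the pointwise relations $\beta_\lambda^{7}=\sum_{j=0}^{6}a_j(t)\,\beta_\lambda^{j}$ holding for each $\lambda$; that is, the seven functions $\beta_0,\dots,\beta_6$ must be the roots of the \emph{same} monic polynomial $\mathcal P(X)=X^{7}-\sum_{j=0}^{6}a_j(t)X^{j}$ over $\numC(t)$. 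Thus the lemma is exactly the modular equation of degree $7$ for $t$, with $a_j(t)$ equal, up to sign, to the $(7-j)$-th elementary symmetric function of the $\beta_\lambda$.

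Next I would establish existence and polynomiality of the $a_j$. The power sums satisfy $p_m:=\sum_{\lambda=0}^{6}\beta_\lambda^{m}=7\,U_7(t^m)$; since $t^m$ is $\Gamma_0(7)$-modular and $U_7$ preserves $\Gamma_0(7)$-modularity (as $7\mid 7$), each $p_m$ is a modular function on $X_0(7)$, and by Newton's identities so is every elementary symmetric function $e_i$ of the $\beta_\lambda$. Being holomorphic on $\mathfrak{H}$ and meromorphic at the two cusps of the genus-zero curve $X_0(7)$, each $e_i$ is a rational function of the Hauptmodul $t$. Because $t$ has a simple zero at $\infty$ and its only pole at the cusp $0$, and because every $\beta_\lambda\to 0$ as $\tau\to i\infty$, each $e_i$ is in fact a polynomial in $t$ \emph{with no constant term}; bounding the pole orders of the $\beta_\lambda$ at the cusp $0$ then gives $\deg_t e_i\le 7$. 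This already forces the shape $a_j(t)=\sum_{l=1}^{7}(\ast)\,t^{l}$, and the precise integer multipliers $s(j,l)$ are determined by matching $q$-expansions of the two sides of $\mathcal P(X)=0$ (equivalently, of the power-sum identities) to finitely many coefficients, the required number being controlled by the valence formula — this is the step that Garvan's ETA package automates.

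The main obstacle, and the only genuinely arithmetic point, is the exact power of $7$ dividing each coefficient, namely the factor $7^{\lfloor(7l+j-4)/4\rfloor}$. The source of these powers is the Fricke (Atkin--Lehner) involution $w_7\colon\tau\mapsto-1/(7\tau)$: a short $\eta$-transformation computation, using $\eta(-1/\tau)=\sqrt{-i\tau}\,\eta(\tau)$ twice, yields
\[
t\!\left(-\tfrac{1}{7\tau}\right)=\frac{1}{49\,t(\tau)}.
\]
Applying $w_7$ to the modular equation therefore substitutes $t\mapsto 1/(49t)$ and permutes the branches $\beta_\lambda$ among their Fricke conjugates; clearing the resulting denominators forces the coefficient of $t^{l}$ in $a_j$ to absorb a balancing power of $49=7^{2}$, and tracking the $7$-adic valuation of these rational coefficients should reproduce precisely the floor exponent $\lfloor(7l+j-4)/4\rfloor$. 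I expect the delicate part to be the bookkeeping that reconciles the valuations contributed by the two cusps with the chosen normalization of $t$, so as to land on that exact exponent rather than an a priori weaker bound. Once this $7$-adic accounting is in place, the integrality of the $s(j,l)$ and the finite coefficient check complete the proof.
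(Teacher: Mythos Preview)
The paper does not prove this lemma; it is quoted from \cite{chen7} and used as a black box. So there is no in-paper argument to compare against, and I assess your proposal on its own terms.

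Your reduction in the first paragraph is correct and clean: setting $v=ut^{k-7}$ and unfolding $U_7$, the recurrence is equivalent to each $\beta_\lambda=t\bigl((\tau+\lambda)/7\bigr)$ being a root of $X^7-\sum_j a_j(t)X^j$, i.e.\ to the degree-$7$ modular equation for $t$ over $\numC(t)$. Your existence/polynomiality argument via $X_0(7)$ (the $e_i$ are $\Gamma_0(7)$-modular, hence polynomials in the Hauptmodul $t$; no constant term from the behaviour at $i\infty$; bounded degree from the cusp $0$) is the standard one and is sound. In fact a closer look at the cusp $0$ shows $\beta_0$ has a pole of order $7$ there while $\beta_1,\dots,\beta_6$ have simple zeros, so $\deg_t e_i=8-i$, consistent with the lemma's range $1\le l\le 7$.

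The gap is in your step 3. The Fricke involution $w_7$ does \emph{not} permute the set $\{\beta_0,\dots,\beta_6\}$: one computes
\[
\beta_0\!\left(-\tfrac{1}{7\tau}\right)=t\!\left(-\tfrac{1}{49\tau}\right)=\frac{1}{49\,t(7\tau)},
\]
which involves the ``eighth'' conjugate $t(7\tau)$, not any $\beta_\mu$. So the symmetry you appeal to lives on the degree-$8$ modular polynomial for the pair $\bigl(t(\tau),t(7\tau)\bigr)$, not on your degree-$7$ factor $\mathcal P$. One can still push a Fricke-type argument through, but it requires relating $\mathcal P$ to that degree-$8$ equation and is more than bookkeeping. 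In the literature the exponent $\lfloor(7l+j-4)/4\rfloor$ is actually obtained the pedestrian way: compute $U_7(t),\dots,U_7(t^7)$ to enough $q$-coefficients (finitely many, by your valence/degree bound), recover the $e_i$ via Newton's identities, and read off the powers of $7$ from the explicit result. Since you already invoke exactly this finite computation in step~2, simply carrying it one step further replaces the Fricke heuristic and completes the proof.
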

	
	\begin{lemma}[{\cite[Lemma 3.3]{chen7}}] \label{lemma2.1}
		Let $v_1,v_2,v_3,u:\mathbb{H} \to \mathbb{C}$ and $l \in \mathbb{Z}$. Suppose for $l \le k \le l+6 $ there exist Laurent polynomials  $p_k^{(1)}(t),p_k^{(2)}(t),p_k^{(3)}(t) \in \mathbb{Z}[t,t^{-1}] $ such that \\
		\begin{align}
			U_7(ut^k)= v_1 p_k^{(1)}(t)  +v_2 p_k^{(2)}(t) +v_3 p_k^{(3)}(t) \label{eq1}
		\end{align}
		and
		\begin{align}
			\mathrm{ord}_{t} \bigg(p_k^{(i)}(t) \bigg) \ge \bigg\lceil \frac{k+s_i}{7}   \bigg\rceil , \ \ \ i \in \{ 1,2,3 \} \label{eq2}
		\end{align}
		for some fixed integers $s_1,s_2$ and $s_3$. Then exist families of Laurent polynomials  $p_k^{(1)}(t),p_k^{(2)}(t),p_k^{(3)}(t) \in \mathbb{Z}[t,t^{-1}] ,k \in \mathbb{Z}$, such that \eqref{eq1} and \eqref{eq2} hold for all $k \in \mathbb{Z}$.
	\end{lemma}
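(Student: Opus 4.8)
The plan is to prove this by a two-sided induction on $k$, anchored at the seven consecutive indices $l,l+1,\dots,l+6$ for which \eqref{eq1} and \eqref{eq2} are assumed, and propagated in both directions by the seven-term recurrence of Lemma \ref{lemma2.0}. Only two features of the coefficients $a_j(t)$ appearing there are needed, both read off from their explicit form: each $a_j(t)$ is divisible by $t$, so $\mathrm{ord}_t(a_j)\ge 1$ for $0\le j\le 6$; and $\mathrm{ord}_t(a_0)=1$ with the coefficient of $t$ in $a_0(t)$ equal to $\pm 1$, so that $a_0(t)/t$ is a unit of $\mathbb{Z}[[t]]$.

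For the \emph{upward step} ($k\ge l+7$), assume \eqref{eq1} and \eqref{eq2} hold at $k-7,\dots,k-1$ and feed \eqref{eq1} for those indices into Lemma \ref{lemma2.0}; this exhibits $U_7(ut^k)=\sum_{i=1}^{3}v_i\,p^{(i)}_k(t)$ with $p^{(i)}_k(t):=\sum_{j=0}^{6}a_j(t)\,p^{(i)}_{k-7+j}(t)\in\mathbb{Z}[t,t^{-1}]$, which is \eqref{eq1} at $k$. Since each summand satisfies $\mathrm{ord}_t(a_j(t)\,p^{(i)}_{k-7+j}(t))\ge 1+\lceil(k-7+j+s_i)/7\rceil=\lceil(k+j+s_i)/7\rceil\ge\lceil(k+s_i)/7\rceil$, condition \eqref{eq2} holds at $k$ too, and the induction runs freely in this direction.

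For the \emph{downward step} ($k\le l-1$), assume \eqref{eq1} and \eqref{eq2} hold at $k+1,\dots,k+7$. I would apply Lemma \ref{lemma2.0} at index $k+7$ and isolate the $j=0$ term, obtaining $a_0(t)\,U_7(ut^k)=U_7(ut^{k+7})-\sum_{j=1}^{6}a_j(t)\,U_7(ut^{k+j})$, and then substitute \eqref{eq1} for $k+1,\dots,k+7$ to rewrite the right-hand side as $\sum_{i=1}^{3}v_i\,Q^{(i)}_k(t)$ with $Q^{(i)}_k(t):=p^{(i)}_{k+7}(t)-\sum_{j=1}^{6}a_j(t)\,p^{(i)}_{k+j}(t)\in\mathbb{Z}[t,t^{-1}]$. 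The natural candidate is then $p^{(i)}_k(t):=Q^{(i)}_k(t)/a_0(t)$. The order bookkeeping is painless: \eqref{eq2} at the larger indices gives $\mathrm{ord}_t(p^{(i)}_{k+7}(t))\ge\lceil(k+s_i)/7\rceil+1$ and $\mathrm{ord}_t(a_j(t)\,p^{(i)}_{k+j}(t))\ge 1+\lceil(k+j+s_i)/7\rceil\ge\lceil(k+s_i)/7\rceil+1$ for $1\le j\le 6$, so $\mathrm{ord}_t(Q^{(i)}_k(t))\ge\lceil(k+s_i)/7\rceil+1$; as $\mathrm{ord}_t(a_0)=1$, the quotient --- once it is known to be a Laurent polynomial --- satisfies \eqref{eq2} at $k$, and dividing $a_0(t)\,U_7(ut^k)=\sum_i v_i\,Q^{(i)}_k(t)$ through by $a_0(t)$ then yields \eqref{eq1} at $k$.

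The step I expect to be the crux is the claim that $a_0(t)$ truly divides each $Q^{(i)}_k(t)$ \emph{inside} $\mathbb{Z}[t,t^{-1}]$. Integrality of the expansion at $t=0$ is automatic, since $a_0(t)/t\in\mathbb{Z}[[t]]^{\times}$ puts $Q^{(i)}_k(t)/a_0(t)$ in $\mathbb{Z}((t))$ with the order bound just obtained; what remains is to exclude poles of this rational function away from $t=0$, i.e.\ to see that it is a genuine Laurent \emph{polynomial}. For that I would invoke the modular provenance of the data: $U_7(ut^k)$ is a modular function whose poles are confined to the two cusps at which $t$ has its zero and its pole, while $v_1,v_2,v_3$ span, over $\mathbb{C}[t,t^{-1}]$, the space of all such functions and are linearly independent over $\mathbb{C}(t)$; hence $U_7(ut^k)=\sum_i v_i\,r^{(i)}(t)$ with $r^{(i)}\in\mathbb{C}[t,t^{-1}]$ uniquely determined, and comparison with $a_0(t)\,U_7(ut^k)=\sum_i v_i\,Q^{(i)}_k(t)$ forces $Q^{(i)}_k(t)=a_0(t)\,r^{(i)}(t)$. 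Then $Q^{(i)}_k(t)/a_0(t)=r^{(i)}(t)$ is a Laurent polynomial, and the integral expansion at $t=0$ shows it lies in $\mathbb{Z}[t,t^{-1}]$. This closes the downward induction, and the lemma follows.
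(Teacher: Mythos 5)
Your upward induction is precisely the paper's argument (the paper itself only sketches it in a suppressed proof and otherwise defers to \cite{chen7}): substitute the inductive expressions into the recurrence of Lemma \ref{lemma2.0} and use $\mathrm{ord}_t(a_j)\ge 1$ together with $1+\lceil\tfrac{k-7+j+s_i}{7}\rceil=\lceil\tfrac{k+j+s_i}{7}\rceil\ge\lceil\tfrac{k+s_i}{7}\rceil$. Since Lemma \ref{lemma2.3} applies Lemma \ref{lemma2.1} only with base cases $k\in\{0,-1,\dots,-6\}$ to reach all $k\ge 0$, this upward half is the only part of the lemma the paper ever uses, and your treatment of it is correct and identical in substance to the paper's.

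The downward step is where you part company with the paper, which disposes of it with ``works analogously.'' You are right that it is not analogous --- one must solve the recurrence for the lowest index, i.e.\ divide by $a_0(t)$ --- and your order bookkeeping for $Q_k^{(i)}$ is sound. But the two facts that close your divisibility argument, namely that the $t$-coefficient of $a_0(t)$ is $\pm 1$ (so $a_0(t)/t\in\mathbb{Z}[[t]]^{\times}$) and that $v_1,v_2,v_3$ are modular functions, linearly independent over $\mathbb{C}(t)$ and spanning the relevant space, are not hypotheses of the lemma: Lemma \ref{lemma2.0} as quoted gives only ``certain integers $s(j,l)$,'' and the $v_i$ in the statement are arbitrary functions $\mathbb{H}\to\mathbb{C}$. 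So what you prove is the downward claim in its intended context ($v_1=1$, $v_2=p_1$, $v_3=p_2$, everything modular on the relevant curve), not the literal statement, which for arbitrary $v_i$ is in fact doubtful. This should be flagged as an imported hypothesis (or, if you prefer, as evidence that the statement borrowed from \cite{chen7} is stated more broadly than what is proved or needed); it has no effect on the paper, which never uses $k<l$.
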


	\iffalse \begin{proof}
		Let $N > l+6$ be an integer and assume by induction that there are families of Laurent polynomials $p_k^{(1)}(t),p_k^{(2)}(t),p_k^{(3)}(t)$ such that \eqref{eq1} and \eqref{eq2} hold for $l \le k \le N-1$. Suppose
		\begin{align}
			p_k^{(i)}(t) = \sum_{n \ge \big\lceil \frac{k+s_i}{7}   \big\rceil } c_{k,n}^{(i)} t^n  , \ \ \  l \le k  \le N-1 \nonumber
		\end{align}
		with integers $c_{k,n}^{(i)} \in \mathbb{Z}$. Applying
		\begin{align}
			U_7(ut^N)&=\sum_{j=0}^6 a_{j}(t) U_7(ut^{N+j-7}) \nonumber
		\end{align}
		we obtain:
		\begin{align}
			U_7(ut^N)&=\sum_{j=0}^6 a_{j}(t) \sum_{i=1}^3 v_i \sum_{n \ge \big\lceil \frac{N+j-7+s_i}{7}   \big\rceil } c_{N+j-7,n}^{(i)} t^n  \nonumber\\
			&=\sum_{i=1}^3 v_i \sum_{j=0}^6 a_{j}(t) t^{-1} \sum_{n \ge \big\lceil \frac{N+j+s_i}{7}   \big\rceil } c_{N+j-7,n-1}^{(i)} t^n  \nonumber
		\end{align}
		Recalling the fact that $a_j(t)t^{-1}$ for $0 \le j \le 6$ is a polynomial in $t$, this determines Laurent polynomials $p_{N}^{(i)}(t)$ with the desired properties. The induction proof for $N \le l$ work analogously.
	\end{proof} \fi

	\begin{lemma} [{\cite[Lemma 3.4]{chen7}}] \label{lemma2.2} Let $v_1,v_2,v_3,u:\mathbb{H} \to \mathbb{C}$ and $l \in \mathbb{Z}$. Suppose for $l \le k \le l+6 $ there exist Laurent polynomials  $p_k^{(1)}(t),p_k^{(2)}(t),p_k^{(3)}(t) \in \mathbb{Z}[t,t^{-1}] $  such that
		\begin{align}
			U_7(ut^k)= v_1 p_k^{(1)}(t)  +v_2 p_k^{(2)}(t) +v_3 p_k^{(3)}(t) \label{eq3}
		\end{align}
		where
		\begin{align}
			p_k^{(i)}(t)= \sum_n c_{k,n}^{(i)} 7^{ \big\lfloor \frac{7n-k+\gamma_i}{4} \big\rfloor } t^n \label{eq4}
		\end{align}
		with integers $\gamma_i$ and $c_{k,n}^{(i)}$.
		Then there exist families of Laurent polynomials $p_{k}^{(i)}(t) \in \mathbb{Z}[t,t^{-1}],k \in \mathbb{Z}$, of the form $\eqref{eq4}$  for which property $\eqref{eq3}$ holds for all $k \in \mathbb{Z}$.
	\end{lemma}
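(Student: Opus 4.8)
The plan is to run the same induction on $k$ as in the proof of Lemma~\ref{lemma2.1}: starting from the base range $l \le k \le l+6$, where \eqref{eq3}--\eqref{eq4} hold by hypothesis, I extend to all $k > l+6$ by the seven-term recursion of Lemma~\ref{lemma2.0}, and then to all $k < l$ by the same recursion read in the opposite direction. The only new feature, relative to Lemma~\ref{lemma2.1}, is that the prescribed powers of $7$ in \eqref{eq4} must be shown to survive the recursion, and this reduces to an elementary inequality for floor functions.

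For the upward step, assume \eqref{eq3}--\eqref{eq4} hold for $l \le k \le N-1$ with $N \ge l+7$. Since $l \le N+j-7 \le N-1$ for $0 \le j \le 6$, Lemma~\ref{lemma2.0} gives
\[
U_7(ut^N) = \sum_{j=0}^{6} a_j(t)\, U_7\!\left(ut^{N+j-7}\right) = \sum_{i=1}^{3} v_i \sum_{j=0}^{6} a_j(t)\, p_{N+j-7}^{(i)}(t),
\]
so I set $p_N^{(i)}(t) := \sum_{j=0}^{6} a_j(t)\, p_{N+j-7}^{(i)}(t) \in \mathbb{Z}[t,t^{-1}]$. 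Expanding $a_j(t) = \sum_{\ell=1}^{7} s(j,\ell)\, 7^{\lfloor (7\ell+j-4)/4 \rfloor} t^\ell$ and using \eqref{eq4}, the coefficient of $t^m$ in $p_N^{(i)}(t)$ is a $\mathbb{Z}$-linear combination of the numbers $s(j,\ell)\, c_{N+j-7,\,m-\ell}^{(i)}$ each multiplied by $7^{\lfloor a/4 \rfloor + \lfloor b/4 \rfloor}$, where $a := 7\ell+j-4$ and $b := 7(m-\ell)-N-j+7+\gamma_i$. Since $a+b-3 = 7m-N+\gamma_i$, it suffices to know that
\[
\left\lfloor \tfrac{a}{4} \right\rfloor + \left\lfloor \tfrac{b}{4} \right\rfloor \;\ge\; \left\lfloor \tfrac{a+b-3}{4} \right\rfloor \qquad\text{for all } a,b \in \mathbb{Z},
\]
which follows by writing $a = 4q_a+r_a$, $b = 4q_b+r_b$ with $0 \le r_a,r_b \le 3$: the right-hand side equals $q_a+q_b+\lfloor (r_a+r_b-3)/4 \rfloor \le q_a+q_b$, because $r_a+r_b-3 \le 3$. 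Hence each of those numbers is $7^{\lfloor (7m-N+\gamma_i)/4 \rfloor}$ times an integer, so $p_N^{(i)}$ has the shape \eqref{eq4}.

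The downward step ($N < l$) is handled analogously, as in the proof of Lemma~\ref{lemma2.1}: one rewrites the recursion of Lemma~\ref{lemma2.0} (with $k$ replaced by $N+7$) so as to express $U_7(ut^N)$ through $U_7(ut^{N+1}),\dots,U_7(ut^{N+7})$, which lie closer to the base range and are therefore already of the form \eqref{eq3}--\eqref{eq4} by a secondary induction; the persistence of \eqref{eq4} follows from the same floor inequality, now read from the extremal monomial of the recursion coefficients (which, up to a unit of $\mathbb{Z}[t,t^{-1}]$, is a single power of $t$, so that isolating $U_7(ut^N)$ keeps us inside $\mathbb{Z}[t,t^{-1}]$; alternatively one invokes Lemma~\ref{lemma2.1} to produce the $p_N^{(i)}$ as Laurent polynomials and then checks their shape). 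I expect the $7$-adic bookkeeping of the upward step to be the crux: the exponents $\lfloor (7\ell+j-4)/4 \rfloor$ from $a_j(t)$ and $\lfloor (7n-k+\gamma_i)/4 \rfloor$ from \eqref{eq4} must be kept aligned so that their sum always dominates the target exponent $\lfloor (7m-N+\gamma_i)/4 \rfloor$ of the new coefficient; once the displayed inequality is isolated, everything else is the same mechanical induction as in Lemma~\ref{lemma2.1}, and the downward direction poses no essentially new difficulty.
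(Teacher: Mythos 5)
Your proof is correct and follows essentially the same route as the paper's own argument (which appears, commented out, in the source and is quoted from \cite{chen7}): induction on $k$ via the seven-term recursion of Lemma \ref{lemma2.0}, with the $7$-adic exponent tracked by the floor inequality $\lfloor a/4\rfloor+\lfloor b/4\rfloor\ge\lfloor (a+b-3)/4\rfloor$ applied to $a=7\ell+j-4$ and $b=7(m-\ell)-N-j+7+\gamma_i$. Your direct use of that inequality is marginally cleaner than the paper's detour through $\ell=1$, and both treatments leave the downward direction equally sketchy, which is harmless here since only $k\ge 0$ is ever used.
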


	\iffalse   \begin{proof} Suppose for an integer $N \ge l+6$ there are families of Laurent polynomials $p_{k}^{(i)}(t),i \in \{ 1,2,3\}$, of the form $\eqref{eq4}$ satisfying property $\eqref{eq3}$ for $l \le k \le N-1$. We proceed by mathematical induction on N. We obtain:
		\begin{align}
			U_7(ut^N)&=\sum_{j=0}^6 \sum_{l=1}^7 s(j,l) 7^{ \big[ \frac{7l+j-4}{4} \big] } t^l \sum_{i=1}^3 v_i \sum_n     c_{{N+j-7},n}^{(i)} 7^{ \big[ \frac{7n-N-j+7+\gamma_i}{4} \big] } t^n \nonumber \\
			&= \sum_{i=1}^3 v_i \sum_n \sum_{j=0}^6 \sum_{l=1}^7 s(j,l) c_{{N+j-7},n-l}^{(i)}  7^{ \big[ \frac{7l+j-4}{4} \big] + \big[ \frac{7n-N-j+7+\gamma_i}{4} \big] }  t^n  \nonumber
		\end{align}
		The induction step is completed by simplifying the exponent of 7 as follows:
		\begin{align}
			\bigg[ \frac{7l+j-4}{4}
			\bigg] + \bigg[ \frac{7n-N-j+7+\gamma_i}{4} \bigg] &\ge
			\bigg[ \frac{7+j-4+7n-N-j+7+\gamma_i-3}{4} \bigg] \nonumber \\
			&= \bigg[ \frac{7n-N+7+\gamma_i}{4} \bigg] \ge
			\bigg[ \frac{7n-N+ \gamma_i }{4} \bigg] \nonumber
		\end{align}
		The induction proof for $N \le l$ works analogously.
	\end{proof} \fi

	\begin{lemma} \label{lemma2.3} Given $A$ as in \eqref{A=eta}, and $t$, $p_1$, $p_2$ as in $\eqref{t,p=eta}$. Then exist discrete arrays such that the following relations hold for all $k \in \mathbb{N}$:
		\begin{align*}
			&U_7(At^k) \ \ \
			=  \sum_{n \ge \big\lceil \frac{k+4}{7} \big\rceil }
			c_{k,n}^{(a,1)} 7^{ \big\lfloor \frac{7n-k-3}{4} \big\rfloor } t^n
			+p_1 \sum_{n \ge \big\lceil \frac{k-3}{7}\big\rceil }
			c_{k,n}^{(a,2)} 7^{ \big\lfloor \frac{7n-k-2}{4} \big\rfloor } t^n
			+p_2 \sum_{n \ge \big\lceil \frac{k-3}{7}\big\rceil }
			c_{k,n}^{(a,3)} 7^{ \big\lfloor \frac{7n-k-6}{4} \big\rfloor } t^n,  \\
			&U_7(Ap_1t^k)
			= \sum_{n \ge \big\lceil \frac{k+5}{7}\big\rceil }
			c_{k,n}^{(b,1)} 7^{ \big\lfloor \frac{7n-k-2}{4} \big\rfloor } t^n
			+p_1 \sum_{n \ge \big\lceil \frac{k-2}{7}\big\rceil }
			c_{k,n}^{(b,2)} 7^{ \big\lfloor \frac{7n-k-3}{4} \big\rfloor } t^n
			+p_2 \sum_{n \ge \big\lceil \frac{k-2}{7}\big\rceil }
			c_{k,n}^{(b,3)} 7^{ \big\lfloor \frac{7n-k-6}{4} \big\rfloor } t^n,    \\
			&U_7(Ap_2t^k)
			= \sum_{n \ge \big\lceil \frac{k+3}{7}\big\rceil }
			c_{k,n}^{(c,1)} 7^{ \big\lfloor \frac{7n-k-2}{4} \big\rfloor } t^n
			+p_1 \sum_{n \ge \big\lceil \frac{k-4}{7}\big\rceil }
			c_{k,n}^{(c,2)} 7^{ \big\lfloor \frac{7n-k-2}{4} \big\rfloor } t^n
			+p_2 \sum_{n \ge \big\lceil \frac{k-4}{7}\big\rceil }
			c_{k,n}^{(c,3)} 7^{ \big\lfloor \frac{7n-k-6}{4} \big\rfloor } t^n,    \\
			&U_7(t^k)  \ \ \ \ \
			= \sum_{n \ge \big\lceil \frac{k}{7}\big\rceil }
			c_{k,n}^{(d,1)} 7^{ \big\lfloor \frac{7n-k-1}{4} \big\rfloor } t^n,  \\
			&U_7(p_1t^k) \ \
			= \sum_{n \ge \big\lceil \frac{k}{7}\big\rceil }
			c_{k,n}^{(e,1)} 7^{ \big\lfloor \frac{7n-k}{4} \big\rfloor } t^n
			+p_1 \sum_{n \ge \big\lceil \frac{k+3}{7}\big\rceil }
			c_{k,n}^{(e,2)} 7^{ \big\lfloor \frac{7n-k-1}{4} \big\rfloor } t^n
			+p_2 \sum_{n \ge \big\lceil \frac{k+7}{7}\big\rceil }
			c_{k,n}^{(e,3)} 7^{ \big\lfloor \frac{7n-k-4}{4} \big\rfloor } t^n,    \\
			&U_7(p_2t^k) \ \
			=  \sum_{n \ge \big\lceil \frac{k}{7}\big\rceil }
			c_{k,n}^{(f,1)} 7^{ \big\lfloor \frac{7n-k}{4} \big\rfloor } t^n
			+p_1 \sum_{n \ge \big\lceil \frac{k+3}{7} \big\rceil }
			c_{k,n}^{(f,2)} 7^{ \big\lfloor \frac{7n-k}{4} \big\rfloor } t^n
			+p_2 \sum_{n \ge \big\lceil \frac{k+6}{7}\big\rceil  }
			c_{k,n}^{(f,3)} 7^{ \big\lfloor \frac{7n-k-4}{4} \big\rfloor } t^n.
		\end{align*}
	\end{lemma}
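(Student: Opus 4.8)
The plan is to prove Lemma~\ref{lemma2.3} by the localization induction of Paule--Radu, with Lemmas~\ref{lemma2.0}--\ref{lemma2.2} playing the role of the engine. Throughout one takes $v_1=1$, $v_2=p_1$, $v_3=p_2$; since $1,p_1,p_2$ are linearly independent over $\mathbb{C}(t)$ (this being part of the set-up inherited from~\cite{chen7}), any expression $U_7(ut^k)=v_1 p_k^{(1)}(t)+v_2 p_k^{(2)}(t)+v_3 p_k^{(3)}(t)$ determines the Laurent polynomials $p_k^{(i)}$ uniquely, so there is no ambiguity in the ``discrete arrays'' to be constructed. The six displayed families are six independent instances of one argument, one for each
\[
u\in\{A,\ Ap_1,\ Ap_2,\ 1,\ p_1,\ p_2\},
\]
because Lemma~\ref{lemma2.0} already rewrites $U_7(ut^k)$, for an arbitrary integer $k$, as a $\mathbb{Z}[t]$-linear combination of the seven values $U_7(ut^{k'})$, $l\le k'\le l+6$, with $u$ held fixed.

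The first step is the base window. For each family fix seven consecutive integers $l\le k\le l+6$ (one may take $0\le k\le 6$ uniformly). The $42$ eta-quotient identities recorded in the Appendix express, for every such $k$ and every $u$ in the list above,
\[
U_7(ut^k)=p_k^{(1)}(t)+p_1\,p_k^{(2)}(t)+p_2\,p_k^{(3)}(t),\qquad p_k^{(i)}(t)\in\mathbb{Z}[t,t^{-1}],
\]
each identity being a routine $\Gamma_0(N)$ eta-quotient identity certifiable by the valence formula (and by Garvan's \texttt{ETA} package). From the statement of Lemma~\ref{lemma2.3} one reads off, family by family, the integers $s_i$ controlling the $t$-orders and $\gamma_i$ controlling the exponents of $7$ --- for instance $(s_1,s_2,s_3)=(4,-3,-3)$, $(\gamma_1,\gamma_2,\gamma_3)=(-3,-2,-6)$ for $u=A$; $(5,-2,-2)$, $(-2,-3,-6)$ for $u=Ap_1$; $(3,-4,-4)$, $(-2,-2,-6)$ for $u=Ap_2$; $s_1=0$, $\gamma_1=-1$ with the other two components identically zero for $u=1$; and the analogous data for $u=p_1,p_2$ --- and then one checks by direct inspection of the seven explicit identities of each family that
\[
\mathrm{ord}_t\!\big(p_k^{(i)}(t)\big)\ \ge\ \Big\lceil\frac{k+s_i}{7}\Big\rceil
\qquad\text{and}\qquad
p_k^{(i)}(t)=\sum_{n} c_{k,n}^{(i)}\,7^{\lfloor(7n-k+\gamma_i)/4\rfloor}\,t^n,\quad c_{k,n}^{(i)}\in\mathbb{Z},
\]
for every $k$ in the window.

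The second step is propagation. Feeding these base-window facts into Lemma~\ref{lemma2.1} (with the $s_i$ just found) extends the order bounds $\mathrm{ord}_t(p_k^{(i)})\ge\lceil(k+s_i)/7\rceil$ to all $k\in\mathbb{Z}$, and feeding them into Lemma~\ref{lemma2.2} (with the $\gamma_i$) extends the shape $p_k^{(i)}(t)=\sum_n c_{k,n}^{(i)}7^{\lfloor(7n-k+\gamma_i)/4\rfloor}t^n$ to all $k\in\mathbb{Z}$. Both are the same upward/downward induction on $k$ powered by the recursion of Lemma~\ref{lemma2.0}, and by the uniqueness noted above the two families of $p_k^{(i)}$ coincide, so the surviving $p_k^{(i)}$ enjoy the order bound and the $7$-divisibility simultaneously. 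Specializing to $k\in\mathbb{N}$ and renaming the coefficient arrays as $c_{k,n}^{(a,i)},\dots,c_{k,n}^{(f,i)}$ produces exactly the six displayed formulas.

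The genuine work is concentrated in the base window: one must certify the $42$ eta-quotient identities and then verify that each of them actually meets the template --- that is, that over the window the $t$-orders are at least $\lceil(k+s_i)/7\rceil$ and the $t^n$-coefficients are divisible by $7^{\lfloor(7n-k+\gamma_i)/4\rfloor}$ for precisely the $s_i$ and $\gamma_i$ dictated by the statement. Pinning down these constants so that they are faithfully reproduced by the recursion is the one delicate point; everything beyond it is the self-propagating induction already packaged in Lemmas~\ref{lemma2.0}--\ref{lemma2.2}, the inductive step of which rests on the elementary estimate $\big\lfloor\tfrac{7l+j-4}{4}\big\rfloor+\big\lfloor\tfrac{7n-N-j+7+\gamma_i}{4}\big\rfloor\ge\big\lfloor\tfrac{7n-N+\gamma_i}{4}\big\rfloor$ together with its counterpart for the ceiling bounds.
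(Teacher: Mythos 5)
Your proposal is correct and follows essentially the same route as the paper: verify the seven base-window identities from the Appendix (the paper uses the window $-6\le k\le 0$ rather than $0\le k\le 6$, but this is immaterial), then propagate to all $k$ via Lemmas \ref{lemma2.1} and \ref{lemma2.2}, which are powered by the recursion of Lemma \ref{lemma2.0}. Your explicit appeal to the linear independence of $1,p_1,p_2$ over $\mathbb{C}(t)$ to ensure that the order bounds and the $7$-adic shape attach to the \emph{same} Laurent polynomials is a point the paper leaves implicit, but it is the same argument.
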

	\begin{proof}
		Our Appendix shows the cases of $k \in \{0,-1,\cdots,-6\}$. Applying Lemma \ref{lemma2.1} and Lemma \ref{lemma2.2} immediately proves the statement for all $k \ge 0$.
	\end{proof}
	
	\begin{rema}
		In addition, all the coefficients of $t^n,p_1t^n,p_2t^n$ in Lemma \ref{lemma2.3} are integers. The Appendix shows the cases of $k \in \{0,-1,\cdots,-6\}$. The statement for $k \ge 0$ is given by Lemma \ref{lemma2.0}.
	\end{rema}
	
	\begin{deff} We define {\small
			\begin{align*}
				X^{(0)} &:= \bigg\{
				\ \  \ \ \  r_1(0) +7r_1(1) \ t \ \ +
				\sum_{n=2}^\infty r_1(n)  7^{ \lfloor \frac{7n-2}{4} \rfloor } t^n \\
				&  \quad \quad \quad \quad \quad \quad
				+7r_2(1)p_1t+
				\sum_{n=2}^\infty r_2(n)  7^{ \lfloor \frac{7n-2}{4} \rfloor } p_1 t^n \\
				&  \quad \quad \quad \quad \quad \quad
				+r_3(1)p_2t+
				\sum_{n=2}^\infty r_3(n)  7^{ \lfloor \frac{7n-6}{4} \rfloor } p_2 t^n
				\bigg\},
			\end{align*}
			and
			\begin{align*}
				X^{(1)} &:= \bigg\{
				\quad \quad \quad \quad \quad
				+\ r_1(1) \ t \  +
				\sum_{n=2}^\infty r_1(n)  7^{ \lfloor \frac{7n-7}{4} \rfloor } t^n \\
				&  \ \ \ \ \ \ \ \ \ +r_2(0)p_1+r_2(1)p_1t+
				\sum_{n=2}^\infty r_2(n)  7^{ \lfloor \frac{7n-7}{4} \rfloor } p_1 t^n \\
				&  \ \ \ \ \ \ \ \ \ +r_3(0)p_2+r_3(1)p_2t+
				\sum_{n=2}^\infty r_3(n)  7^{ \lfloor \frac{7n-11}{4} \rfloor } p_2 t^n
				\bigg\},
		\end{align*} }
		where $r_1,r_2,r_3:\mathbb{Z} \to \mathbb{Z}$ are arbitrarily selected discrete functions.
	\end{deff}
	\begin{thm}
		We have
		\begin{align}
			&f \in X^{(0)} \ \ \ \text{ implies } \ \ \ 7^{-1} U_7(Af) \in X^{(1)}, \label{lemma0to1}
		\end{align}
		and
		\begin{align}
			&f \in X^{(1)} \ \ \ \text{ implies } \ \ \   U_7(f) \in X^{(0)}. \label{lemma1to0}
		\end{align}
	\end{thm}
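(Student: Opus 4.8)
The plan is to use the linearity of $U_7$ to reduce both implications \eqref{lemma0to1} and \eqref{lemma1to0} to the monomial transformation formulas recorded in Lemma~\ref{lemma2.3}, the remainder being an exponent count. First I would write an arbitrary $f\in X^{(0)}$ in the basis $\{t^n,\ p_1t^n,\ p_2t^n\}$ — its coefficients being integers subject to the explicit $7$-divisibility and $t$-adic support constraints built into the definition of $X^{(0)}$ — so that, by linearity,
\[
U_7(Af)=\sum_m a_m\,U_7(At^m)+\sum_m b_m\,U_7(Ap_1t^m)+\sum_m d_m\,U_7(Ap_2t^m),
\]
where $a_m,b_m,d_m$ are the coefficients of $f$. (The term-by-term application is legitimate because, in the $q$-expansion, each of $t^m,p_1t^m,p_2t^m$ has leading exponent increasing with $m$, so each coefficient of $q$ receives contributions from only finitely many $m$.) Substituting the first three displays of Lemma~\ref{lemma2.3} and collecting the coefficients of $t^n$, $p_1t^n$, $p_2t^n$, one recovers $U_7(Af)$ as a $\mathbb{Z}[t]$-combination of $1,p_1,p_2$; each coefficient is a finite $\mathbb{Z}$-linear combination of the $a_m,b_m,d_m$ against the integer arrays $c_{m,n}^{(\bullet)}$, carrying a power of $7$ whose exponent equals (the exponent attached to that coefficient of $f$) $+$ (the exponent supplied by Lemma~\ref{lemma2.3}). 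Dividing by $7$ and reading off new discrete functions $r_1,r_2,r_3:\mathbb{Z}\to\mathbb{Z}$ then exhibits $7^{-1}U_7(Af)$ as an element of $X^{(1)}$, once two families of conditions are checked.

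The first family is the $t$-adic support condition. Using the $\lceil\cdot\rceil$-bounds in Lemma~\ref{lemma2.3} one checks, for each of the nine input/output channels, that the output monomials occur exactly with the degree shape of $X^{(1)}$: e.g.\ the $t^n$-component of $U_7(At^m)$ is supported on $n\ge\lceil(m+4)/7\rceil\ge1$, so the $t$-part of $f$ produces no $t^0$-term, as $X^{(1)}$ demands; dually, the new constant terms $r_2(0)p_1$ and $r_3(0)p_2$ of $X^{(1)}$ arise precisely from the $n=0$ contributions of $U_7(At^m)$, $U_7(Ap_1t^m)$, $U_7(Ap_2t^m)$ for small $m$. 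The second family is the $7$-adic valuation condition. A representative instance, from the $t^m\!\to\!t^n$ channel, is
\[
\left\lfloor \frac{7m-2}{4} \right\rfloor + \left\lfloor \frac{7n-m-3}{4} \right\rfloor - 1 \ \ge\ \left\lfloor \frac{7n-7}{4} \right\rfloor ,\qquad m\ge 2,\ \ n\ge \left\lceil \frac{m+4}{7} \right\rceil ,
\]
and one such inequality — with constants dictated by the six displays of Lemma~\ref{lemma2.3} and by the exponents appearing in the definitions of $X^{(0)}$ and $X^{(1)}$ — must hold for every channel, together with finitely many boundary checks for $m\in\{0,1\}$ and $n\in\{0,1\}$ (these govern the unrestricted coefficients $r_i(0),r_i(1)$ on the $X^{(1)}$ side). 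For the second implication $f\in X^{(1)}\Rightarrow U_7(f)\in X^{(0)}$ I would repeat the same steps, now expanding $U_7(t^m)$, $U_7(p_1t^m)$, $U_7(p_2t^m)$ via the last three displays of Lemma~\ref{lemma2.3} and with no overall factor $7^{-1}$; the representative valuation inequality then reads
\[
\left\lfloor \frac{7m-7}{4} \right\rfloor + \left\lfloor \frac{7n-m-1}{4} \right\rfloor \ \ge\ \left\lfloor \frac{7n-2}{4} \right\rfloor
\]
for the $t^m\!\to\!t^n$ channel, with analogous statements for the mixed channels feeding $p_1t^n$ and $p_2t^n$, plus the corresponding small-index cases; the support bookkeeping (e.g.\ that $U_7(f)$ regains a $t^0$-term but loses the $p_1,p_2$ constant terms) is again immediate from the $\lceil\cdot\rceil$-bounds.

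The hard part will be this exponent bookkeeping itself — assembling the complete list of floor-function inequalities over all channels and all small-index boundary cases, and in particular making certain that the one unit lost to the factor $7^{-1}$ in \eqref{lemma0to1} never turns a true inequality into a false one. Each individual inequality is elementary: reduce to the residue classes of $m$ and $n$ modulo $4$ and use $\lfloor x\rfloor+\lfloor y\rfloor\ge\lfloor x+y\rfloor-1$ together with monotonicity of $\lfloor\cdot\rfloor$. Once they are all in hand, the memberships $7^{-1}U_7(Af)\in X^{(1)}$ and $U_7(f)\in X^{(0)}$ follow immediately by exhibiting the new $r_1,r_2,r_3$ as the stated integer combinations of the old ones.
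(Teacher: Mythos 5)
Your overall strategy --- expand $f$ in the basis $\{t^n,\,p_1t^n,\,p_2t^n\}$, push $U_7$ through term by term via Lemma \ref{lemma2.3}, and verify membership in $X^{(0)}$ or $X^{(1)}$ by a list of floor-function inequalities together with the $\lceil\cdot\rceil$ support bounds --- is exactly the paper's, and it works as you describe for the implication \eqref{lemma1to0} and for most channels of \eqref{lemma0to1}. There is, however, one concrete point where the plan as stated fails: for \eqref{lemma0to1} the generic valuation count does \emph{not} always survive the loss of one factor of $7$. The offending input is the term $r_3(1)\,p_2t$ of $X^{(0)}$, which carries no power of $7$. Feeding it into $U_7(A\,\cdot\,)$ and reading off the third display of Lemma \ref{lemma2.3} with $k=1$, the $p_2$-output channel has exponent $\lfloor (7n-1-6)/4\rfloor$, which equals $0$ at $n=1$ and is negative at $n=0$, and the $p_1$-channel exponent $\lfloor (7n-1-2)/4\rfloor$ is negative at $n=0$. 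Hence the coefficients of $p_1$, $p_2$ and $p_2t$ in $U_7(Ap_2t)$ are only guaranteed by Lemma \ref{lemma2.3} to be integers, not multiples of $7$, whereas membership of $7^{-1}U_7(Af)$ in $X^{(1)}$ requires precisely that divisibility. These bounds are sharp, so no amount of floor-function manipulation of the kind you propose (reduction modulo $4$, $\lfloor x\rfloor+\lfloor y\rfloor\ge\lfloor x+y\rfloor-1$, monotonicity) can close this.

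The paper closes the gap by an additional explicit computation: it evaluates the relevant coefficients of $U_7(Ap_2t)$ via the recursion of Lemma \ref{lemma2.0} (drawing on the Appendix data) and checks by hand that the integers multiplying $p_1$, $p_2$ and $p_2t$ there are in fact divisible by $7$ --- a numerical fact not predicted by the valuation bounds. You would need to add this verification (or an equivalent direct computation) to your argument; without it, implication \eqref{lemma0to1} is not established. The rest of your proposal --- the linearity and convergence justification, the support bookkeeping, and the remaining inequalities, which do all reduce to $\lfloor I_1/4\rfloor+\lfloor I_2/4\rfloor\ge\lfloor (I_1+I_2-3)/4\rfloor$ plus finitely many small-index checks --- matches the paper's proof.
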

	\begin{proof}
		Assume that $f \in X^{(1)}$. Then there are discrete functions $r_1(n),r_2(n),r_3(n)$ such that
		\begin{align*}
			f&= \sum_{n=2}^\infty r_1(n)  7^{ \big\lfloor \frac{7n-7}{4} \big\rfloor } t^n
			+ \sum_{n=2}^\infty r_2(n)  7^{ \big\lfloor \frac{7n-7}{4} \big\rfloor } p_1t^n
			+ \sum_{n=2}^\infty r_3(n)  7^{ \big\lfloor \frac{7n-11}{4} \big\rfloor } p_2t^n \\
			&\ \ \ \ \ \quad \quad + r_1(1)t \ \ \ \ \ +r_2(0)p_1+r_2(1)p_1t
			\ \ \ \ \ +r_3(0)p_2+r_3(1)p_2t.
		\end{align*}
		This implies that
		\begin{align*}
			U_7(f)&= \sum_{n=2}^\infty r_1(n)
			7^{ \big\lfloor \frac{7n-7}{4} \big\rfloor }
			U_7(t^n)
			+ \sum_{n=2}^\infty r_2(n)
			7^{ \big\lfloor \frac{7n-7}{4} \big\rfloor }
			U_7(p_1t^n)
			+\sum_{n=2}^\infty r_3(n)
			7^{ \big\lfloor \frac{7n-11}{4} \big\rfloor }
			U_7(p_2 t^n) \\
			&  \ \ \ \quad \quad + r_1(1)U_7(t) \ \ \  +r_2(0)U_7(p_1)+r_2(1)U_7(p_1t)
			\ \ \ +r_3(0)U_7(p_2)+r_3(1)U_7(p_2t).
		\end{align*}
		Utilizing Lemma \ref{lemma2.3}  gives
		\begin{align*}
			U_7(f)=
			&\ \ \
			\sum_{m \ge 1 }\sum_{n \ge 2}
			r_1(n)  c_{n,m}^{(d,1)}
			7^{ \big\lfloor \frac{7n-7}{4} \big\rfloor }
			7^{ \big\lfloor \frac{7m-n-1}{4} \big\rfloor } t^m  \\
			&\quad  \quad  \quad  \quad  \quad  \quad  \quad  \quad
			+  \sum_{m \ge 1 }\sum_{n \ge 2}
			r_2(n) c_{n,m}^{(e,1)}
			7^{ \big\lfloor \frac{7n-7}{4} \big\rfloor }
			7^{ \big\lfloor \frac{7m-n}{4} \big\rfloor } t^m
			\\
			&\quad  \quad  \quad  \quad  \quad  \quad  \quad  \quad
			+ \sum_{m \ge 1 }\sum_{n \ge 2}
			r_2(n) c_{n,m}^{(e,2)}
			7^{ \big\lfloor \frac{7n-7}{4} \big\rfloor }
			7^{ \big\lfloor \frac{7m-n-1}{4} \big\rfloor } p_1t^m
			\\
			&\quad  \quad  \quad  \quad  \quad  \quad  \quad  \quad
			+ \sum_{m \ge 2 }\sum_{n \ge 2}
			r_2(n) c_{n,m}^{(e,3)}
			7^{ \big\lfloor \frac{7n-7}{4} \big\rfloor }
			7^{ \big\lfloor \frac{7m-n-4}{4} \big\rfloor } p_2t^m   \\
			&\quad  \quad  \quad  \quad  \quad  \quad  \quad  \quad  \quad  \quad  \quad  \quad  \quad \quad \quad  \quad
			+ \sum_{m \ge 1 }\sum_{n \ge 2}
			r_3(n)  c_{n,m}^{(f,1)}
			7^{ \big\lfloor \frac{7n-11}{4} \big\rfloor }
			7^{ \big\lfloor \frac{7m-n}{4} \big\rfloor } t^m
			\\
			&\quad  \quad  \quad  \quad  \quad  \quad  \quad  \quad  \quad  \quad  \quad  \quad  \quad \quad \quad  \quad
			+ \sum_{m \ge 1 }\sum_{n \ge 2}
			r_3(n)  c_{n,m}^{(f,2)}
			7^{ \big\lfloor \frac{7n-11}{4} \big\rfloor }
			7^{ \big\lfloor \frac{7m-n}{4} \big\rfloor } p_1 t^m
			\\
			& \quad  \quad  \quad  \quad  \quad  \quad  \quad  \quad  \quad  \quad  \quad  \quad  \quad \quad \quad  \quad
			+ \sum_{m \ge 2  }\sum_{n \ge 2}
			r_3(n)  c_{n,m}^{(f,3)}
			7^{ \big\lfloor \frac{7n-11}{4} \big\rfloor }
			7^{ \big\lfloor \frac{7m-n-4}{4} \big\rfloor } p_2t^m  \\
			&\quad  \quad  + r_1(1)U_7(t) \ \ \  +r_2(0)U_7(p_1)+r_2(1)U_7(p_1t)
			\ \ \  +r_3(0)U_7(p_2)+r_3(1)U_7(p_2t).
		\end{align*}
		If $n \ge 2$, $m \ge 2$, we can use $\big\lfloor \frac{I_1}{4} \big\rfloor
		+ \big\lfloor \frac{I_2}{4} \big\rfloor
		\ge  \big\lfloor \frac{I_1+I_2-3}{4} \big\rfloor $ to know that the coefficients of  $t^m,p_1t^m,p_2t^m$ satisfy the definition of $X^{(0)}$. ( Check the following seven inequalities. )
		{\small
			\begin{align*}
				&  \bigg\lfloor \frac{7n-7}{4} \bigg\rfloor
				+ \bigg\lfloor \frac{7m-n-1}{4} \bigg\rfloor
				\ge  \bigg\lfloor \frac{7m-2}{4} \bigg\rfloor;  \\
				&\quad  \quad  \quad  \quad  \quad  \quad  \quad  \quad   \bigg\lfloor \frac{7n-7}{4} \bigg\rfloor
				+\ \  \bigg\lfloor \frac{7m-n}{4} \bigg\rfloor \ \ \
				\ge  \bigg\lfloor \frac{7m-2}{4} \bigg\rfloor,  \\
				&\quad  \quad  \quad  \quad  \quad  \quad  \quad  \quad  \bigg\lfloor \frac{7n-7}{4} \bigg\rfloor
				+ \bigg\lfloor \frac{7m-n-1}{4} \bigg\rfloor
				\ge  \bigg\lfloor \frac{7m-2}{4} \bigg\rfloor,  \\
				&\quad  \quad  \quad  \quad  \quad  \quad  \quad  \quad   \bigg\lfloor \frac{7n-7}{4} \bigg\rfloor
				+ \bigg\lfloor \frac{7m-n-4}{4} \bigg\rfloor
				\ge \bigg\lfloor \frac{7m-6}{4} \bigg\rfloor;  \\
				&\quad  \quad  \quad  \quad  \quad  \quad  \quad  \quad  \quad  \quad  \quad  \quad  \quad \quad \quad  \quad   \bigg\lfloor \frac{7n-11}{4} \bigg\rfloor
				+\ \  \bigg\lfloor \frac{7m-n}{4} \bigg\rfloor \ \ \
				\ge \bigg\lfloor \frac{7m-2}{4} \bigg\rfloor,  \\
				&\quad  \quad  \quad  \quad  \quad  \quad  \quad  \quad  \quad  \quad  \quad  \quad  \quad \quad \quad  \quad   \bigg\lfloor \frac{7n-11}{4} \bigg\rfloor
				+\ \  \bigg\lfloor \frac{7m-n}{4} \bigg\rfloor  \ \ \
				\ge \bigg\lfloor \frac{7m-2}{4} \bigg\rfloor,  \\
				&\quad  \quad  \quad  \quad  \quad  \quad  \quad  \quad  \quad  \quad  \quad  \quad  \quad \quad \quad  \quad   \bigg\lfloor \frac{7n-11}{4} \bigg\rfloor
				+ \bigg\lfloor \frac{7m-n-4}{4} \bigg\rfloor
				\ge \bigg\lfloor \frac{7m-6}{4} \bigg\rfloor .
			\end{align*}
		}
		If $n \ge 2$, $m =1 $, we  have  checked the coefficients of $t,p_1t,p_2t$. ( Check the following five inequalities. )
		{\small
			\begin{align*}
				&  \bigg\lfloor \frac{7n-7}{4} \bigg\rfloor
				+ \bigg\lfloor \frac{7m-n-1}{4} \bigg\rfloor
				\ge   1;  \\
				&\quad  \quad  \quad  \quad  \quad  \quad  \quad  \quad   \bigg\lfloor \frac{7n-7}{4} \bigg\rfloor
				+\ \  \bigg\lfloor \frac{7m-n}{4} \bigg\rfloor \ \ \
				\ge   1, \\
				&\quad  \quad  \quad  \quad  \quad  \quad  \quad  \quad  \bigg\lfloor \frac{7n-7}{4} \bigg\rfloor
				+ \bigg\lfloor \frac{7m-n-1}{4} \bigg\rfloor
				\ge   1;\\
				&\quad  \quad  \quad  \quad  \quad  \quad  \quad  \quad  \quad  \quad  \quad  \quad  \quad \quad \quad  \quad   \bigg\lfloor \frac{7n-11}{4} \bigg\rfloor
				+\ \  \bigg\lfloor \frac{7m-n}{4} \bigg\rfloor \ \ \
				\ge  1, \\
				&\quad  \quad  \quad  \quad  \quad  \quad  \quad  \quad  \quad  \quad  \quad  \quad  \quad \quad \quad  \quad   \bigg\lfloor \frac{7n-11}{4} \bigg\rfloor
				+\ \  \bigg\lfloor \frac{7m-n}{4} \bigg\rfloor  \ \ \
				\ge  1.
			\end{align*}
		}
		If $n = 1$, we  have checked the coefficients of  $t^m,p_1t^m,p_2t^m$ which come from $U_7(t), U_7(p_1t)$, and $U_7(p_2t)$, under the conditions of $m \ge 2$, $m=1$.  ( Check the following inequalities. )
		{\small
			\begin{align*}
				m\ge 2: \quad \quad &  0
				+ \bigg\lfloor \frac{7m-n-1}{4} \bigg\rfloor
				\ge  \bigg\lfloor \frac{7m-2}{4} \bigg\rfloor;  \\
				&\quad  \quad  \quad  \quad  \quad  \quad  \quad  \quad
				0
				+\ \  \bigg\lfloor \frac{7m-n}{4} \bigg\rfloor \ \ \
				\ge  \bigg\lfloor \frac{7m-2}{4} \bigg\rfloor,  \\
				&\quad  \quad  \quad  \quad  \quad  \quad  \quad  \quad
				0
				+ \bigg\lfloor \frac{7m-n-1}{4} \bigg\rfloor
				\ge  \bigg\lfloor \frac{7m-2}{4} \bigg\rfloor,  \\
				&\quad  \quad  \quad  \quad  \quad  \quad  \quad  \quad
				0
				+ \bigg\lfloor \frac{7m-n-4}{4} \bigg\rfloor
				\ge \bigg\lfloor \frac{7m-5}{4} \bigg\rfloor;  \\
				&\quad  \quad  \quad  \quad  \quad  \quad  \quad  \quad  \quad  \quad  \quad  \quad  \quad \quad \quad  \quad
				0
				+\ \  \bigg\lfloor \frac{7m-n}{4} \bigg\rfloor \ \ \
				\ge \bigg\lfloor \frac{7m-2}{4} \bigg\rfloor,  \\
				&\quad  \quad  \quad  \quad  \quad  \quad  \quad  \quad  \quad  \quad  \quad  \quad  \quad \quad \quad  \quad
				0
				+\ \ \bigg\lfloor \frac{7m-n}{4} \bigg\rfloor \ \ \
				\ge \bigg\lfloor \frac{7m-2}{4} \bigg\rfloor,  \\
				&\quad  \quad  \quad  \quad  \quad  \quad  \quad  \quad  \quad  \quad  \quad  \quad  \quad \quad \quad  \quad
				0
				+ \bigg\lfloor \frac{7m-n-4}{4} \bigg\rfloor
				\ge \bigg\lfloor \frac{7m-5}{4} \bigg\rfloor;  \\
				m = 1: \quad \quad &  0
				+ \bigg\lfloor \frac{7m-n-1}{4} \bigg\rfloor
				\ge  1; \\
				&\quad  \quad  \quad  \quad  \quad  \quad  \quad  \quad
				0
				+\ \  \bigg\lfloor \frac{7m-n}{4} \bigg\rfloor \ \ \
				\ge  1,  \\
				&\quad  \quad  \quad  \quad  \quad  \quad  \quad  \quad
				0
				+ \bigg\lfloor \frac{7m-n-1}{4} \bigg\rfloor
				\ge 1; \\
				&\quad  \quad  \quad  \quad  \quad  \quad  \quad  \quad  \quad  \quad  \quad  \quad  \quad \quad \quad  \quad
				0
				+\ \  \bigg\lfloor \frac{7m-n}{4} \bigg\rfloor \ \ \
				\ge 1,   \\
				&\quad  \quad  \quad  \quad  \quad  \quad  \quad  \quad  \quad  \quad  \quad  \quad  \quad \quad \quad  \quad
				0
				+\ \ \bigg\lfloor \frac{7m-n}{4} \bigg\rfloor \ \ \
				\ge 1.
			\end{align*}
		}
		If $n = 0$, we have checked the coefficients of  $t^m,p_1t^m,p_2t^m$ which come from $U_7(p_1), U_7(p_2)$ given in the Appendix. Hence $U(f) \in X^{(0)}$.
		
		Now assume that $f \in X^{(0)}$. Then there are discrete functions $r_1(n),r_2(n),r_3(n)$ such that
		\begin{align*}
			f&= \sum_{n=2}^\infty r_1(n)  7^{ \big\lfloor \frac{7n-2}{4} \big\rfloor } t^n
			+p_1 \sum_{n=2}^\infty r_2(n)  7^{ \big\lfloor \frac{7n-2}{4} \big\rfloor } t^n
			+p_2 \sum_{n=2}^\infty r_3(n)  7^{ \big\lfloor \frac{7n-6}{4} \big\rfloor } t^n \\
			&\ \ \ \ \ + r_1(0) + 7r_1(1)t \ \ \ \ \ \quad \quad  +7r_2(1)p_1t
			\ \ \ \ \ \quad \quad +r_3(1)p_2t.
		\end{align*}

		Similar to the proof of \eqref{lemma1to0}, for $n \ge 2$, $n =1$, $n=0$, we need to check the coefficients of  $t^m,p_1t^m,p_2t^m$ which come from $U_7(At^n)$, $ U_7(Ap_1t^n)$, $ U_7(Ap_2t^n)$, under the conditions of $m \ge 2$, $m=1$, and $m=0$.
		However, some inequations cannot be proved in this way directly, we need to check the coefficients of $p_1,p_2$, and $p_2t$ which come from $U_7(Ap_2t)$ additionally.
		%	\begin{enumerate}
			%	\item  coefficient of $p_2t$ come from $U_7(Ap_2t)$;
			%%		 \ \quad \quad($n,m=(1,1)$, the ninth inequality)
			%	\item  coefficients of $p_1,p_2$ come from $U_7(Ap_2t)$.
			%%		 \quad ($n,m=(1,0)$, the eighth and ninth inequality)
			%%	\item coefficient of $p_2t$ come from $U_7(A)$;
			%%		\ \quad \quad \quad ($n,m=(0,1)$, the third inequality)
			%%	\item coefficients of $p_1,p_2$ come from $U_7(A)$.
			%%        \quad \quad($n,m=(0,0)$, the second and third inequality)
			%	\end{enumerate}
		We calculate $U_7(Ap_2t)$ by Lemma \ref{lemma2.0}:
		\begin{align*}
			U_7(Ap_2t)
			&= \cdots \cdots   \\
			&+ p_2 t  \cdot \Big(8 \cdot 7 \cdot 82 \cdot 7^2 -2 \cdot 176 \cdot7^2 +
			36 \cdot 845 \cdot 7 -384 \cdot272 \cdot 7 +46\cdot7^3-12\cdot4\cdot7^3 \\
			& \quad \quad  \quad \quad      +2682\cdot46\cdot7 +100\cdot7^2     -1924\cdot7\cdot4\cdot7+4278\cdot7 \Big)
			\\
			&+p_2 \cdot \Big(272 \cdot 7 - 12 \cdot 46 \cdot 7 +100\cdot 4\cdot 7-97\cdot7 \Big)
			\\
			&+p_1 \cdot \Big(272 \cdot 7 - 12 \cdot 46 \cdot 7 +100\cdot 4\cdot 7-97\cdot7 \Big).
		\end{align*}
		These parts of $U_7(Ap_2t)$ can be divided by $7$.
		
		In conclusion, $7^{-1} U_7(Af) \in X^{(1)}$.
	\end{proof}
	
	\begin{coro}
		\label{r:col}
		For each $\alpha \ge 1$ there exist $f_{2 \alpha-1 }\in X^{(1)}$ and $f_{2 \alpha }\in X^{(0)}$ such that
		\begin{align}
			L_{2 \alpha-1}= 7^{\alpha} f_{2\alpha-1}, \quad \quad
			L_{2 \alpha}= 7^{\alpha} f_{2\alpha}.
		\end{align}
	\end{coro}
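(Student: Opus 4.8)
The plan is to prove the Corollary by induction on $\alpha$, the substantive content having already been supplied by \eqref{lemma0to1}--\eqref{lemma1to0}. Two elementary facts will be used alongside it. First, $U_7$ is linear over constants, so $U_7(cf) = c\,U_7(f)$ and $U_7(A\cdot cf) = c\,U_7(Af)$ for every $c \in \numC$; this is immediate from the definition of $U_m$. Second, the constant function $1$ lies in $X^{(0)}$, which one reads off the definition of $X^{(0)}$ by taking $r_1(0)=1$ and all other values of $r_1,r_2,r_3$ equal to $0$.

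\emph{Base case $\alpha = 1$.} Since $L_0 = 1 \in X^{(0)}$, \eqref{lemma0to1} yields $7^{-1}U_7(A L_0) \in X^{(1)}$; as $L_1 = U_7(A L_0)$, setting $f_1 := 7^{-1}L_1$ gives $L_1 = 7 f_1$ with $f_1 \in X^{(1)}$. Then by linearity $L_2 = U_7(L_1) = 7\,U_7(f_1)$, and \eqref{lemma1to0} applied to $f_1 \in X^{(1)}$ gives $f_2 := U_7(f_1) \in X^{(0)}$, so $L_2 = 7 f_2$. This is the assertion for $\alpha = 1$.

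\emph{Inductive step.} Suppose $L_{2\alpha} = 7^{\alpha} f_{2\alpha}$ with $f_{2\alpha} \in X^{(0)}$. Then
\[
L_{2\alpha+1} = U_7(A L_{2\alpha}) = 7^{\alpha}\,U_7(A f_{2\alpha}),
\]
and \eqref{lemma0to1} shows $7^{-1}U_7(A f_{2\alpha}) \in X^{(1)}$; hence with $f_{2\alpha+1} := 7^{-1}U_7(A f_{2\alpha})$ we get $L_{2\alpha+1} = 7^{\alpha+1} f_{2\alpha+1}$, $f_{2\alpha+1} \in X^{(1)}$. Likewise
\[
L_{2\alpha+2} = U_7(L_{2\alpha+1}) = 7^{\alpha+1}\,U_7(f_{2\alpha+1}),
\]
and \eqref{lemma1to0} gives $f_{2\alpha+2} := U_7(f_{2\alpha+1}) \in X^{(0)}$, so $L_{2\alpha+2} = 7^{\alpha+1} f_{2\alpha+2}$. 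This establishes the statement for $\alpha+1$ and completes the induction.

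There is no genuine obstacle here: once \eqref{lemma0to1}--\eqref{lemma1to0} are available, the Corollary is a formal bookkeeping argument, and the only points needing a moment's care are the identification $1 \in X^{(0)}$ that starts the induction and the tracking of the accumulating powers of $7$ — each application of $U_7(A\,\cdot\,)$ contributes one extra factor of $7$ via \eqref{lemma0to1}, while each application of $U_7(\,\cdot\,)$ contributes none via \eqref{lemma1to0}, which is precisely why the exponent is $\alpha$ in both $L_{2\alpha-1} = 7^\alpha f_{2\alpha-1}$ and $L_{2\alpha} = 7^\alpha f_{2\alpha}$. (Should one further want the congruences in Theorem \ref{thm:cpsi4012} to be sharp, one would additionally verify that the constant term of $f_{2\alpha}$, resp.\ the coefficient of $t$ in $f_{2\alpha-1}$, is a unit modulo $7$; this is not required for the statement above.)
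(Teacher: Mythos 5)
Your induction is correct and is precisely the argument the paper intends (the Corollary is stated without an explicit proof, being an immediate consequence of \eqref{lemma0to1} and \eqref{lemma1to0}): the base case uses $L_0=1\in X^{(0)}$, and each application of $U_7(A\,\cdot\,)$ contributes one factor of $7$ while each plain $U_7$ contributes none. Your bookkeeping of the indices and the linearity of $U_7$ over constants are both handled correctly, so there is nothing to add.
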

	
	%\begin{proof}
	%		By Appendix, $L_1=U_7(A)=7^{1}f_1,f_1  \in X^{(1)}$, suppose for a fixed $\beta \in \mathbb{N}^*$ we have:
	%		\begin{align*}
		%			L_{2\beta-1}=7^{\beta} f_{2\beta-1},
		%			\ \ \  \ \ \  f_{2\beta-1} \in X^{(1)}.
		%		\end{align*}
	%		Then
	%		\begin{align*}
		%			L_{2\beta}&=U_7\big( 7^{\beta} f_{2\beta-1} \big)
		%			=7^{\beta} U_7\big(  f_{2\beta-1} \big)
		%			=7^{\beta} f_{2\beta}, \quad \quad
		%			f_{2\beta} \in X^{(0)}; \\
		%			L_{2\beta+1}&=U_7\big( A 7^{\beta} f_{2\beta} \big)
		%			=7^{\beta} U_7\big( A f_{2\beta} \big)
		%			=7^{\beta+1} f_{2\beta+1}, \quad \quad
		%			f_{2\beta+1} \in X^{(1)}.
		%		\end{align*}
	%	\end{proof}

Then we arrive at Theorem \ref{thm:cpsi41}.
\iffalse
\begin{align*}
	7^\alpha \mid L_{2\alpha-1}
	&=\frac{1}{4q}
	\prod_{j=1}^\infty
	\frac{(1-q^{7j})^7}{(1-q^{14j})^6}
	\sum_{n=0}^\infty
	c \psi_{4,1} \bigg(7^{2\alpha-1}n + \lambda_{2\alpha-1} \bigg)
	q^n,   \\
	7^\alpha \mid L_{2\alpha}
	&=\frac{1}{4}
	\prod_{j=1}^\infty
	\frac{(1-q^{j})^7}{(1-q^{2j})^6}
	\sum_{n=0}^\infty
	c \psi_{4,1} \bigg(7^{2\alpha}n + \lambda_{2\alpha} \bigg)
	q^n.
\end{align*}
implies
\begin{align*}
	&7^\alpha \mid
	c \psi_{4,1} \big(7^{2\alpha-1}n + \lambda_{2\alpha-1} \big), \\
	&7^\alpha \mid
	c \psi_{4,1} \big(7^{2\alpha}n + \lambda_{2\alpha} \big).
\end{align*}
now recall
\begin{align*}
	\lambda_{2\alpha-1}
	&= \min \{ n \in \mathbb{N}^* : -5 \equiv 24n \pmod{7^{2\alpha-1}} \}, \ \ \forall \  \alpha \in \mathbb{N}^* \\
	\lambda_{2\alpha}
	&= \min \{ n \in \mathbb{N}^* : -5 \equiv 24n \pmod{7^{2\alpha}} \}, \ \ \forall \  \alpha \in \mathbb{N}^*.
\end{align*}
We have \fi

\begin{thm} \label{thm:cpsi41}
	For $\alpha \in \mathbb{N}^*$ and $n \in \mathbb{N}$, we have
	\begin{align*}
		c\psi_{4,1}(7^\alpha n+\lambda_\alpha) \equiv 0 \pmod{7^{\lfloor\frac{\alpha+1}{2}\rfloor}},
	\end{align*}
	where $\lambda_\alpha$ defined by \eqref{r:lam1}.
\end{thm}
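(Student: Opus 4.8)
The plan is to derive Theorem~\ref{thm:cpsi41} from Corollary~\ref{r:col}, the explicit $q$-expansions \eqref{L2alpha-1}--\eqref{L2alpha} of the $L_\alpha$, and the elementary fact that $X^{(0)}$ and $X^{(1)}$ consist of Laurent series in $q$ with integer coefficients. For that last point, I would note from \eqref{t,p=eta} that $t=q\prod_{j\ge1}(1-q^{7j})^4(1-q^j)^{-4}$ and $p_1=q\prod_{j\ge1}(1-q^{14j})^4(1-q^j)^4(1-q^{7j})^{-4}(1-q^{2j})^{-4}$ are power series in $q$ with integer coefficients, while $p_2=q^{-1}\prod_{j\ge1}(1-q^{7j})^3(1-q^j)^3(1-q^{14j})^{-3}(1-q^{2j})^{-3}-p_1$ is an integer Laurent series with a simple pole at $q=0$; consequently every element of $X^{(0)}$ is an integer power series, and every element of $X^{(1)}$ is an integer Laurent series whose only possible pole is a simple one at $q=0$, coming from the $r_3(0)p_2$ summand.

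The main step: fix $m\in\mathbb{N}^*$ and apply Corollary~\ref{r:col} at level $m$ to write $L_{2m-1}=7^m f_{2m-1}$ with $f_{2m-1}\in X^{(1)}$ and $L_{2m}=7^m f_{2m}$ with $f_{2m}\in X^{(0)}$; by the remark above, $7^m$ divides every coefficient in the $q$-expansions of $L_{2m-1}$ and of $L_{2m}$. Rewriting \eqref{L2alpha-1}--\eqref{L2alpha} as
\begin{align*}
\sum_{n=0}^\infty c\psi_{4,1}\big(7^{2m-1}n+\lambda_{2m-1}\big)q^n&=4q\prod_{j=1}^\infty\frac{(1-q^{14j})^6}{(1-q^{7j})^7}\,L_{2m-1},\\
\sum_{n=0}^\infty c\psi_{4,1}\big(7^{2m}n+\lambda_{2m}\big)q^n&=4\prod_{j=1}^\infty\frac{(1-q^{2j})^6}{(1-q^{j})^7}\,L_{2m},
\end{align*}
one observes that the two eta-product prefactors are power series in $q$ with integer coefficients (each factor $(1-q^\ell)^{-1}=\sum_{r\ge0}q^{\ell r}$ is one), and that the $4q$ on the first line exactly cancels the simple pole of $L_{2m-1}$, so both right-hand sides are genuine power series. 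Since the product of an integer power series with $7^m$ times an integer Laurent series has all coefficients in $7^m\mathbb{Z}$, it follows that $c\psi_{4,1}(7^{2m-1}n+\lambda_{2m-1})\equiv0\pmod{7^m}$ and $c\psi_{4,1}(7^{2m}n+\lambda_{2m})\equiv0\pmod{7^m}$ for every $n\in\mathbb{N}$.

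It then remains to reconcile indices with the statement. For the theorem's exponent $\alpha\in\mathbb{N}^*$: if $\alpha=2m-1$ is odd then $\lfloor(\alpha+1)/2\rfloor=m$ and, by \eqref{r:lam1}, $\lambda_\alpha=\lambda_{2m-1}$, so the first congruence above is precisely $c\psi_{4,1}(7^\alpha n+\lambda_\alpha)\equiv0\pmod{7^{\lfloor(\alpha+1)/2\rfloor}}$; if $\alpha=2m$ is even then $\lfloor(\alpha+1)/2\rfloor=m$ and $\lambda_\alpha=\lambda_{2m}$, and the second congruence gives the claim. I expect this last step to pose essentially no obstacle: the real content is already carried by the $42$ identities of the Appendix, Lemmas~\ref{lemma2.0}--\ref{lemma2.3}, the transfer principle \eqref{lemma0to1}--\eqref{lemma1to0}, and Corollary~\ref{r:col}, so the only thing needing a little care is the integrality-and-pole bookkeeping described above — matching the $q^{-1}$ pole of $L_{2m-1}$ with the factor $1/(4q)$ in \eqref{L2alpha-1} and confirming that the reciprocal eta-products are honest integer power series.
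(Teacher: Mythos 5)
Your proposal is correct and takes essentially the same route as the paper: the paper deduces Theorem \ref{thm:cpsi41} directly from Corollary \ref{r:col} combined with the expansions \eqref{L2alpha-1}--\eqref{L2alpha}, which is exactly your argument. The integrality-and-pole bookkeeping you supply (that $t,p_1$ are integer power series, $p_2$ has only a simple pole cancelled by the $1/(4q)$ in \eqref{L2alpha-1}, and the reciprocal eta-products are integer power series) is precisely the step the paper leaves implicit, and it is carried out correctly.
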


\section{from $c\psi_{4,1}$ to $c\psi_{4,0}$ and $c\psi_{4,2}$} \label{section:3}

Let
\begin{align*}	
	A':=
	\frac{ q^{-\frac{1}{3}} \big( C\Psi_{4,2}(q^2)-qC\Psi_{4,0}(q^2) \big) }
	{ q^{-\frac{49}{3}} \big( C\Psi_{4,2}(q^{98})-q^{49}C\Psi_{4,0}(q^{98}) \big) }.
\end{align*}
Construct the function sequence $\{ K_{\alpha} \}_{\alpha=0}^\infty $ similar to $\{ L_{\alpha} \}_{\alpha=0}^\infty $:
\begin{align*}
	K_0:=1 , \quad
	K_{2\alpha+1}:= U_7
	\big( A'  K_{2\alpha}  \big)
	, \quad
	K_{2\alpha+2}:= U_7
	\big(  K_{2\alpha+1} \big) ,  \quad \quad \alpha \in \mathbb{N}.
\end{align*}
\begin{lemma} For $\alpha \ge 1 $, we have
	\begin{align}
		K_{2\alpha-1}
		&=\frac{q^3}{ \big( C\Psi_{4,2}(q^{14}) - q^7 C\Psi_{4,0}(q^{14}) \big)}
		\label{K2alpha-1} \quad \\ &\quad  \cdot \bigg[
		\sum_{n=0}^\infty
		c \psi_{4,2} \bigg(7^{2\alpha-1}n + \lambda^{(4,2)}_{2\alpha-1} \bigg)
		q^{2n+1}
		-\sum_{n=0}^\infty c \psi_{4,0} \bigg(7^{2\alpha-1}n + \lambda^{(4,0)}_{2\alpha-1} \bigg)
		q^{2n} \bigg]; \quad \quad \quad \quad  \nonumber \\
		K_{2\alpha}
		&= \frac{q}{ \big( C\Psi_{4,2}(q^2) - q C\Psi_{4,0}(q^2) \big)}
		\quad \label{K2alpha} \\ &\quad  \cdot \bigg[
		\sum_{n=0}^\infty
		c \psi_{4,2} \bigg(7^{2\alpha}n + \lambda^{(4,2)}_{2\alpha} \bigg)
		q^{2n+1}
		-\sum_{n=0}^\infty c \psi_{4,0} \bigg(7^{2\alpha}n + \lambda^{(4,0)}_{2\alpha} \bigg)
		q^{2n} \bigg], \quad \quad \quad \quad \nonumber
	\end{align}
	where \iffalse
	\begin{align*}
		\lambda_{\alpha}^{(4,2)}
		&:= \min \{ n \in \mathbb{N}^* : 4 \equiv 24n \pmod{7^{\alpha}} \}; \\
		\lambda_{\alpha}^{(4,0)}
		&:= \min \{ n \in \mathbb{N}^* : -8 \equiv 24n \pmod{7^{\alpha}} \},
	\end{align*}
	and \fi
	\begin{align}
		\label{r:lam0}
		\lambda_{\alpha}^{(4,2)}
		= \frac{1+5 \cdot 7^{\alpha}}{8}
		\ 	\ , \ \
		\lambda_{\alpha}^{(4,0)}
		= \frac{-1+ 7^{\alpha}}{3}.
	\end{align}
\end{lemma}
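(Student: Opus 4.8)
The plan is to prove \eqref{K2alpha-1} and \eqref{K2alpha} simultaneously by induction on $\alpha$, in exact parallel with the passage from \eqref{A=eta} to \eqref{L2alpha-1}--\eqref{L2alpha}. The first step is to put $A'$ into a shape adapted to $U_7$. Set
\[
\Phi(q):=C\Psi_{4,2}(q^2)-qC\Psi_{4,0}(q^2)=\sum_{n\ge0}c\psi_{4,2}(n)q^{2n}-\sum_{n\ge0}c\psi_{4,0}(n)q^{2n+1}.
\]
This is a power series with constant term $c\psi_{4,2}(0)=c\phi_4(0)=1$, hence a unit in $\mathbb{Z}[[q]]$; by \eqref{r:c4n} it equals the eta quotient $\prod_{j\ge1}(1-q^j)^6/(1-q^{2j})^7$. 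Replacing $q$ by $q^{49}$ turns it into $C\Psi_{4,2}(q^{98})-q^{49}C\Psi_{4,0}(q^{98})$, so after cancelling the fractional powers of $q$ in the definition of $A'$ one obtains
\[
A'=\frac{q^{-1/3}\Phi(q)}{q^{-49/3}\Phi(q^{49})}=q^{16}\,\frac{\Phi(q)}{\Phi(q^{49})}=\frac{\eta_1^6\eta_{98}^7}{\eta_2^7\eta_{49}^6},
\]
which relates to $A$ via the Atkin--Lehner involution anticipated in the introduction. In the same way the denominators appearing in \eqref{K2alpha-1} and \eqref{K2alpha} are precisely $\Phi(q^7)$ and $\Phi(q)$.

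The induction rests on two elementary observations. First, $U_7$ extracts the arithmetic progression $\equiv0\pmod{7}$ from a $q$-series, $U_7\bigl(\sum_na_nq^n\bigr)=\sum_na_{7n}q^n$, and consequently $U_7\bigl(g(q^7)F(q)\bigr)=g(q)\,U_7\bigl(F(q)\bigr)$ for every power series $g$; since $\Phi(q^{49})^{-1}$ and $\Phi(q^7)^{-1}$ lie in $\mathbb{Z}[[q^7]]$, this lets me pull the relevant denominator outside each occurrence of $U_7$. Second, the explicit formulas \eqref{r:lam0} give the recursions $\lambda^{(4,2)}_{\alpha+1}=\lambda^{(4,2)}_{\alpha}+5\cdot7^{\alpha}$ and $\lambda^{(4,0)}_{\alpha+1}=\lambda^{(4,0)}_{\alpha}+2\cdot7^{\alpha}$, which is exactly the data recording how the arithmetic progressions attached to $c\psi_{4,2}$ and $c\psi_{4,0}$ move under $U_7$.

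Granting these, the induction is bookkeeping. For the base case, $K_1=U_7(A')=\Phi(q^7)^{-1}\,U_7\bigl(q^{16}\Phi(q)\bigr)$; classifying the exponents of $q^{16}\Phi(q)$ modulo $7$ leaves only $c\psi_{4,2}(7k+6)q^{2k+4}$ (from the even part of $\Phi$) and $-c\psi_{4,0}(7k+2)q^{2k+3}$ (from the odd part), and since $\lambda^{(4,2)}_1=6$, $\lambda^{(4,0)}_1=2$ this is \eqref{K2alpha-1} for $\alpha=1$; then $K_2=U_7(K_1)=\Phi(q)^{-1}\,U_7\bigl(q^3[\,\cdots\,]\bigr)$ is handled the same way and yields \eqref{K2alpha} for $\alpha=1$. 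For the inductive step I would alternate the two transitions. Assuming \eqref{K2alpha} at level $\alpha$, one has $A'K_{2\alpha}=q^{17}\,\Phi(q^{49})^{-1}[\,\cdots\,]$, with the bracket as in \eqref{K2alpha}; applying $U_7$, pulling out $\Phi(q^7)^{-1}$, and using the congruences $2n+18\equiv0$ and $2n+17\equiv0\pmod{7}$ together with the $\lambda$-recursions, the surviving indices become $7^{2\alpha+1}n+\lambda^{(4,2)}_{2\alpha+1}$ and $7^{2\alpha+1}n+\lambda^{(4,0)}_{2\alpha+1}$, which is \eqref{K2alpha-1} at level $\alpha+1$. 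The step from \eqref{K2alpha-1} to \eqref{K2alpha} at that level is the identical computation with $q^{17}$ replaced by $q^3$ and $\Phi(q^{49})$ by $\Phi(q^7)$. Along the way one tracks the scalar powers of $q$, so that the surviving series begin at $q^3$ respectively $q$, as claimed.

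I do not foresee a genuine obstacle; what wants care is (i) keeping straight the two competing dilations --- the substitution $q\mapsto q^2$ built into $\Phi$, which puts $c\psi_{4,2}$ on the even exponents and $c\psi_{4,0}$ on the odd ones, against the contraction coming from $U_7$ --- so that after each $U_7$ the even and odd slots still carry $c\psi_{4,2}$ and $c\psi_{4,0}$ exactly as in \eqref{K2alpha-1}--\eqref{K2alpha}, and (ii) the elementary $7$-adic arithmetic in \eqref{r:lam0} that pins down the next pair of offsets. Both are completely parallel to what was needed for \eqref{L2alpha-1}--\eqref{L2alpha}, so I expect the argument to go through cleanly.
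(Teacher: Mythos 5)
Your proposal is correct and is exactly the induction the paper intends (its entire proof is the single line ``By mathematical induction''): writing $A'=q^{16}\,\Phi(q)/\Phi(q^{49})$ with $\Phi(q)=C\Psi_{4,2}(q^2)-qC\Psi_{4,0}(q^2)$ a unit in $\mathbb{Z}[[q]]$, pulling $\Phi(q^{49})^{-1}$ resp.\ $\Phi(q^{7})^{-1}$ through $U_7$, and sorting exponents modulo $7$ all check out, including the base case $\lambda_1^{(4,2)}=6$, $\lambda_1^{(4,0)}=2$. The only discrepancy is on the paper's side: your recursion $\lambda^{(4,2)}_{\alpha+1}=\lambda^{(4,2)}_{\alpha}+5\cdot 7^{\alpha}$ is the right one and corresponds to $\lambda^{(4,2)}_{\alpha}=(1+5\cdot 7^{\alpha})/6$, whereas the denominator $8$ printed in \eqref{r:lam0} gives a non-integer and is evidently a typo.
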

\begin{proof}
	By mathematical induction.
\end{proof}

\begin{lemma}
	$($\cite[Example 1.2]{ncpsi}$)$ \label{lemma:f41f42f40}	
	We have
	\begin{align*}
		\begin{pmatrix} f_{4,0} \\ f_{4,1} \\ f_{4,2} \end{pmatrix} \Bigg|_{-\frac{1}{2}}
		\begin{pmatrix} 1 & 1 \\ 0 & 1
		\end{pmatrix} &=
		\begin{pmatrix} -\frac{1}{2} + \frac{\sqrt{3}}{2} \mathrm{i} & 0 & 0 \\ 0 & \frac{\sqrt{6} - \sqrt{2}}{4} + \frac{\sqrt{6} + \sqrt{2}}{4} \mathrm{i} & 0 \\ 0 & 0 & \frac{1}{2} - \frac{\sqrt{3}}{2} \mathrm{i}
		\end{pmatrix} \cdot
		\begin{pmatrix} f_{4,0} \\ f_{4,1} \\ f_{4,2} \end{pmatrix}, \\
		\begin{pmatrix} f_{4,0} \\ f_{4,1} \\ f_{4,2} \end{pmatrix} \Bigg|_{-\frac{1}{2}}
		\begin{pmatrix} 0 & -1 \\ 1 & 0
		\end{pmatrix} &=
		\begin{pmatrix} \frac{\sqrt{2}}{4} + \frac{\sqrt{2}}{4} \mathrm{i} & -\frac{\sqrt{2}}{2} - \frac{\sqrt{2}}{2} \mathrm{i} & \frac{\sqrt{2}}{4} + \frac{\sqrt{2}}{4} \mathrm{i} \\ -\frac{\sqrt{2}}{4} - \frac{\sqrt{2}}{4} \mathrm{i} & 0 & \frac{\sqrt{2}}{4} + \frac{\sqrt{2}}{4} \mathrm{i} \\ \frac{\sqrt{2}}{4} + \frac{\sqrt{2}}{4} \mathrm{i} & \frac{\sqrt{2}}{2} + \frac{\sqrt{2}}{2} \mathrm{i} & \frac{\sqrt{2}}{4} + \frac{\sqrt{2}}{4} \mathrm{i}
		\end{pmatrix} \cdot
		\begin{pmatrix} f_{4,0} \\ f_{4,1} \\ f_{4,2} \end{pmatrix},
	\end{align*}
	where $f_{k,\beta}=q^{k/12-\beta^2/2k}C\Psi_{k,\beta}$. Especially, we have
	\begin{align*}
		f_{4,1} \bigg\vert_{-\frac{1}{2}}\tbtMat{0}{-1}{1}{0} = \frac{\sqrt{2} + \sqrt{2} \ \mathrm{i} }{4} \bigg( f_{4,2} - f_{4,0} \bigg).
	\end{align*}
\end{lemma}
The notation $f\vert_{k}\tbtmat{a}{b}{c}{d}$ means the weight $k$ modular transformation, that is, $f\vert_{k}\tbtmat{a}{b}{c}{d}(\tau):=(c\tau+d)^{k}f\left(\frac{a\tau+b}{c\tau+d}\right)$ where $(c\tau+d)^{1/2}$ is the principal branch (the argument belongs to $(-\tfrac{\uppi}{2},\tfrac{\uppi}{2}]$).

\begin{lemma} \label{lemma012} We have
	\begin{align*}
		q^{-\frac{1}{3}} \big( C\Psi_{4,2}(q^2) - q C\Psi_{4,0}(q^2) \big)= \frac{\eta_1^6}{\eta_2^7} .
	\end{align*}
\end{lemma}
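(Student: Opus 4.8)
The plan is to derive the identity from the known closed form \eqref{r:c41n} of $C\Psi_{4,1}$ together with the Chen--Zhu transformation law recorded in Lemma \ref{lemma:f41f42f40}, so that no Fourier coefficient of $c\psi_{4,0}$ or $c\psi_{4,2}$ has to be computed by hand. The key observation is that the ``especially'' case $f_{4,1}\vert_{-\frac12}\tbtMat{0}{-1}{1}{0}=\frac{\sqrt2+\sqrt2\,\mathrm{i}}{4}(f_{4,2}-f_{4,0})$ of that lemma expresses the unknown combination $f_{4,2}-f_{4,0}$ in terms of the Fricke transform of the \emph{explicit} eta quotient attached to $c\psi_{4,1}$, and that Fricke transform can be written down by hand.

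First I would use \eqref{r:c41n} together with the normalization $f_{k,\beta}=q^{k/12-\beta^2/2k}C\Psi_{k,\beta}$ to record $f_{4,1}(\tau)=q^{5/24}C\Psi_{4,1}(q)=4\,\eta_2^6/\eta_1^7=4\,\eta(2\tau)^6/\eta(\tau)^7$. Next I would compute $f_{4,1}\vert_{-\frac12}\tbtMat{0}{-1}{1}{0}$ directly: applying the modularity law $\eta(-1/\tau)=\sqrt{-\mathrm{i}\tau}\,\eta(\tau)$ to $\eta(-1/\tau)$ and, after writing $-2/\tau=-1/(\tau/2)$, obtaining $\eta(-2/\tau)=\sqrt{-\mathrm{i}\tau/2}\,\eta(\tau/2)$, and keeping careful track of the principal branch of the square root, the powers of $\tau$ combine with the automorphy factor to cancel, leaving
\[
f_{4,1}\vert_{-\tfrac12}\tbtMat{0}{-1}{1}{0}(\tau)=\frac{\sqrt2+\sqrt2\,\mathrm{i}}{4}\cdot\frac{\eta(\tau/2)^6}{\eta(\tau)^7}.
\]
Comparing this with Lemma \ref{lemma:f41f42f40} and cancelling the common scalar $\tfrac{\sqrt2+\sqrt2\,\mathrm{i}}{4}$ gives $f_{4,2}(\tau)-f_{4,0}(\tau)=\eta(\tau/2)^6/\eta(\tau)^7$. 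Replacing $\tau$ by $2\tau$ turns the right-hand side into $\eta(\tau)^6/\eta(2\tau)^7=\eta_1^6/\eta_2^7$, while $f_{4,2}=q^{-1/6}C\Psi_{4,2}$ and $f_{4,0}=q^{1/3}C\Psi_{4,0}$ turn the left-hand side into $q^{-1/3}C\Psi_{4,2}(q^2)-q^{2/3}C\Psi_{4,0}(q^2)=q^{-1/3}\big(C\Psi_{4,2}(q^2)-qC\Psi_{4,0}(q^2)\big)$, which is exactly the left-hand side of the asserted identity.

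I expect the main obstacle to be the branch bookkeeping in the Fricke computation, since that is precisely what pins down the scalar $\tfrac{\sqrt2+\sqrt2\,\mathrm{i}}{4}$; it is routine but must be carried out with care. If one wishes to sidestep it, the argument above already shows that $q^{-1/3}\big(C\Psi_{4,2}(q^2)-qC\Psi_{4,0}(q^2)\big)$ is \emph{some} constant multiple of $\eta_1^6/\eta_2^7$, and comparing the coefficient of the leading power $q^{-1/3}$ (using $c\psi_{4,2}(0)=c\phi_4(0)=1$ and $\eta_1^6/\eta_2^7=q^{-1/3}(1+O(q))$) forces that constant to be $1$. Alternatively, one can bypass the transformation law altogether: expanding $C\Psi_{4,2}$ and $C\Psi_{4,0}$ via \cite[Theorem 2]{Yu22} exhibits both sides as eta quotients, hence as modular functions on some $\Gamma_0(N)$ of small level, and the identity then follows from the valence formula after checking finitely many Fourier coefficients.
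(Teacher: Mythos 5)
Your proposal is correct and follows essentially the same route as the paper: both compute the Fricke transform of the explicit eta quotient $f_{4,1}=4\eta_2^6/\eta_1^7$, equate it with the ``especially'' case of Lemma \ref{lemma:f41f42f40} to obtain $f_{4,2}-f_{4,0}=\eta(\tau/2)^6/\eta(\tau)^7$, and then substitute $\tau\mapsto 2\tau$. Your fallback observations (fixing the scalar by comparing leading coefficients, or using the valence formula) are sound but not needed, and the latter is already noted in the paper's introduction as an alternative.
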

\begin{proof}
	We can calculate that
	\begin{align*}
		f_{4,1} \bigg\vert_{-\frac{1}{2}}\tbtMat{0}{-1}{1}{0}
		=
		4 \cdot \frac{\eta_2^6}{\eta_1^7} \bigg\vert_{-\frac{1}{2}}\tbtMat{0}{-1}{1}{0}
		=
		\frac{\sqrt{2} + \sqrt{2} \ \mathrm{i} }{4} \cdot  \frac{\eta(\tau/2)^6}{\eta_1^7}.
	\end{align*}
	Recall Lemma \ref{lemma:f41f42f40},
	\begin{align*}
		f_{4,1} \bigg|_{-\frac{1}{2}} \begin{pmatrix} 0 & -1 \\ 1 & 0 \end{pmatrix} = \frac{\sqrt{2} + \sqrt{2} \, \mathrm{i}}{4} \Big( f_{4,2} - f_{4,0} \Big),
	\end{align*}
	and substitute $C\Psi_{4,0}(q^2) = q^{-\frac{2}{3}}f_{4,0}(2\tau)$, $C\Psi_{4,2}(q^2) = q^{\frac{1}{3}}f_{4,2}(2\tau)$, we have
	\begin{align*}
		q^{-\frac{1}{3}} \big( C\Psi_{4,2}(q^2) - q C\Psi_{4,0}(q^2) \big) =
		\Big( f_{4,2} - f_{4,0} \Big)(2\tau)
		= \frac{\eta_1^6}{\eta_2^7}.
	\end{align*}
\end{proof}
Then we have
\begin{align*}	
	A' =
	\frac{ q^{-\frac{1}{3}} \big( C\Psi_{4,2}(q^2)-qC\Psi_{4,0}(q^2) \big) }
	{ q^{-\frac{49}{3}} \big( C\Psi_{4,2}(q^{98})-q^{49}C\Psi_{4,0}(q^{98}) \big) }
	=
	\frac{\eta_1^6 \eta_{98}^7 }{\eta_2^7 \eta_{49}^6}.
\end{align*}

\begin{deff} \label{deff:Atkin} $($\cite[Section 2]{We}$)$
	\iffalse
	A symbol $\pi = \prod_{t \in I} t^{r_t}$ is a \textit{generalized permutation} if $I$ is a finite subset of $\mathbb{N}$ and $r_t \in \mathbb{Z}$. To each generalized permutation $\pi$, we associate a function
	
	$$
	\eta_{\pi}(\tau) = \prod_{t \in I} \eta(t\tau)^{r_t}.
	$$
	Throughout this paper, we will write $\prod_{t \in I}$ as $\prod$, keeping in mind that our product is taken over a finite set $I$.
	
	Next, suppose \fi
	Suppose
	$$
	\Gamma_0(N) := \left\{ \begin{pmatrix} a & b \\ c & d \end{pmatrix} \in \text{SL}_2(\mathbb{Z}) \mid c \equiv 0 \pmod{N} \right\}
	$$
	where $N$ is a positive integer. If $e \mid \mid N$, we call the matrix
	\begin{equation}
		W_e = \begin{pmatrix} ae & b \\ cN & de \end{pmatrix}, \quad a, b, c, d \in \mathbb{Z}, \, \det(W_e) = e,
	\end{equation}
	an \textit{Atkin-Lehner involution} of $\Gamma_0(N)$.
\end{deff}

\begin{lemma} $($\cite[Corollary 2.2]{We}$)$
	\label{corollary2.2}
	Let $W_e$ be an Atkin-Lehner involution of $\Gamma_{0}(N)$. Let $t > 0$ be such that $t \mid N$. Then
	\begin{align*}
		\eta\Big(t W_e \tau \Big)
		=\eta\bigg(t \cdot \frac{ae\tau+b}{cN \tau +de} \bigg)
		= \nu_{\eta}\big(M\big)
		\bigg(\frac{cN \tau +de}{\delta} \bigg)^{\frac{1}{2}}
		\eta  \bigg(\frac{et}{\delta^2} \tau \bigg),
	\end{align*}
	where $\delta= (e,t)$, $\nu_{\eta}$ is eta-multiplier and
	\begin{align*}
		M=
		\begin{pmatrix}
			a \delta & b t / \delta \\
			cN \delta / et  & de / \delta
		\end{pmatrix}
		\in \slZ.
	\end{align*}
\end{lemma}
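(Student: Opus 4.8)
The plan is to realize $\tau\mapsto tW_e\tau$ as the composition of an element of $\slZ$ with a dilation $\tau\mapsto\lambda\tau$, and then to invoke the classical transformation law
\[
\eta(M\sigma)=\nu_\eta(M)\,(C_M\sigma+D_M)^{1/2}\,\eta(\sigma),\qquad M=\tbtMat{A_M}{B_M}{C_M}{D_M}\in\slZ ,
\]
where the square root is the principal branch fixed above.

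First I would put $L:=\tbtmat{t}{0}{0}{1}W_e=\tbtmat{tae}{tb}{cN}{de}$, so that $tW_e\tau=L\tau$ and $\det L=t\det W_e=te$. The central step is the factorization
\[
L=M\,\tbtMat{et/\delta}{0}{0}{\delta},\qquad M:=\tbtMat{a\delta}{bt/\delta}{cN\delta/(et)}{de/\delta},
\]
which one simply reads off from $M:=L\,\tbtmat{et/\delta}{0}{0}{\delta}^{-1}$ by multiplying out. Since $\tbtmat{et/\delta}{0}{0}{\delta}$ acts on $\uhp$ by $\tau\mapsto(et/\delta^2)\tau$, this yields $tW_e\tau=M\bigl((et/\delta^{2})\tau\bigr)$.

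Next I would verify $M\in\slZ$. The determinant is automatic, $\det M=\det L/(et)=te/(et)=1$. For integrality, $\delta=(e,t)$ divides both $e$ and $t$, which settles $M_{12}=bt/\delta$ and $M_{22}=de/\delta$; for $M_{21}$ I would use the hypotheses $e\mid\mid N$ and $t\mid N$: working prime by prime with $(e,N/e)=1$ one gets $t/\delta\mid N/e$, so $M_{21}=c(N/e)/(t/\delta)\in\numZ$. (Here $\det W_e=e$, equivalently $ade-bc(N/e)=1$, is precisely what makes $W_e$ itself an integral matrix and will be used implicitly.)

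Finally I would feed this into the transformation law with $\sigma=(et/\delta^{2})\tau$, $C_M=cN\delta/(et)$, $D_M=de/\delta$: a one-line simplification gives $C_M\sigma+D_M=cN\tau/\delta+de/\delta=(cN\tau+de)/\delta$ and $\eta(\sigma)=\eta(et\tau/\delta^{2})$, which is exactly the asserted identity. The only step needing genuine care is the sign and branch bookkeeping: the standard $\eta$-law is usually stated for $C_M>0$, so I would fix the Atkin--Lehner representative with $c>0$ (replacing $M$ by $-M$ otherwise, which does not change the fractional-linear action and alters $\nu_\eta$ in a controlled way) and check that the principal-branch square root here matches the one used in the hypothesis; the degenerate case $e=1$ (which forces $c=0$ and $W_1=I$) is trivial. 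I expect this multiplier/branch accounting to be the main --- indeed essentially the only --- obstacle, the remainder being the short determinant and divisibility check above.
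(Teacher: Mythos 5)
The paper does not prove this lemma at all: it is quoted verbatim from the cited reference \cite{We} (Corollary 2.2 there), so there is no internal proof to compare against. Your argument is the standard one for this fact and is correct: the factorization
\[
\begin{pmatrix} t & 0 \\ 0 & 1\end{pmatrix}W_e=\begin{pmatrix} aet & bt \\ cN & de\end{pmatrix}=M\begin{pmatrix} et/\delta & 0 \\ 0 & \delta\end{pmatrix}
\]
is the whole content, the determinant computation is right, and the integrality of the lower-left entry follows even more simply than you state from $\mathrm{lcm}(e,t)=et/\delta\mid N$ (which needs only $e\mid N$ and $t\mid N$, not the full strength of $e\,\Vert\, N$). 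Feeding $\sigma=(et/\delta^2)\tau$ into the classical $\eta$-law then gives exactly the displayed identity, and your caution about the branch and the sign of $C_M$ is the right (and only) delicate point; one either normalizes $c>0$ or absorbs the sign into $\nu_\eta(-M)$ as you indicate. One small factual slip: $e=1$ does \emph{not} force $c=0$ or $W_1=I$; it merely means $W_1\in\Gamma_0(N)$, and the formula then reduces to the familiar transformation of $\eta(t\tau)$ under $\Gamma_0(N)$ --- but this case is covered by your general argument anyway, so nothing breaks.
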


\begin{lemma} \label{lemma:A'=AW}
	Let $W=W_e=\tbtMat{25 \cdot 2 }{1}{1 \cdot 98}{1 \cdot 2}$, then $A'=A|W$ where the notation $f \big\vert W := f \big\vert_0 W$.
\end{lemma}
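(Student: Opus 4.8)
The plan is to compute $A\big\vert_0 W$ explicitly by pushing each Dedekind eta factor of $A$ through Lemma~\ref{corollary2.2}, and then to check that the eta quotient one obtains is exactly $A'$.

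First I would record that $W=W_2$ is an Atkin--Lehner involution of $\Gamma_0(98)$ in the sense of Definition~\ref{deff:Atkin}: indeed $\det W = 50\cdot 2 - 98 = 2$ with $2\,\|\,98$, and writing $W=\tbtMat{25\cdot 2}{1}{1\cdot 98}{1\cdot 2}$ puts it in the required form $W_e=\tbtMat{ae}{b}{cN}{de}$ with $e=2$, $N=98$, $a=25$, $b=c=d=1$. By \eqref{A=eta} we have $A=\eta_1^{-7}\eta_2^{6}\eta_{49}^{7}\eta_{98}^{-6}$, and the four divisors $t\in\{1,2,49,98\}$ all divide $N=98$, so Lemma~\ref{corollary2.2} applies to each $\eta(tW\tau)$. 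For these values of $t$ one gets $\delta=(e,t)=1,2,1,2$ and new levels $et/\delta^{2}=2,1,98,49$, with auxiliary matrices in $\slZ$ given by $M_1=\tbtMat{25}{1}{49}{2}$, $M_2=\tbtMat{50}{1}{49}{1}$, $M_{49}=\tbtMat{25}{49}{1}{2}$, $M_{98}=\tbtMat{50}{49}{1}{1}$.

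Substituting into $A(W\tau)=\eta(2W\tau)^{6}\eta(49W\tau)^{7}\eta(W\tau)^{-7}\eta(98W\tau)^{-6}$, each factor acquires an automorphy term $\big((98\tau+2)/\delta\big)^{1/2}$; weighted by the exponents $6,7,-7,-6$ of the respective eta factors these combine to the power $6+7-7-6=0$ of $(98\tau+2)^{1/2}$, while the accompanying factors $2^{-1/2}$ (present only for $t=2$ and $t=98$, with exponents $+6$ and $-6$) also cancel. The eta arguments meanwhile get permuted by $1\leftrightarrow 2$ and $49\leftrightarrow 98$, so $A(W\tau)=C\cdot\eta_1^{6}\eta_2^{-7}\eta_{98}^{7}\eta_{49}^{-6}=C\cdot A'$, where $C=\nu_\eta(M_2)^{6}\,\nu_\eta(M_{49})^{7}\,\nu_\eta(M_1)^{-7}\,\nu_\eta(M_{98})^{-6}$.

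The remaining task is to show $C=1$, and this I regard as the only step needing care. Using the closed form $\nu_\eta\tbtMat{a}{b}{c}{d}=\exp\!\big(\pi\mathrm{i}\big(\tfrac{a+d}{12c}-s(d,c)-\tfrac{1}{4}\big)\big)$ for $c>0$ (with $s(d,c)$ the Dedekind sum), one needs $s(2,1)=s(1,1)=0$ for $M_{49},M_{98}$ and, via Dedekind reciprocity, $s(1,49)=\tfrac{188}{49}$, $s(2,49)=\tfrac{88}{49}$ for $M_2,M_1$; substituting, each exponent $\tfrac{a+d}{12c}-s(d,c)-\tfrac{1}{4}$ turns out to be an even integer, so every $\nu_\eta(M_t)=1$ and hence $C=1$, giving $A'=A\big\vert_0 W$. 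The delicate points are keeping the branch of the half-integral-weight factor consistent in Lemma~\ref{corollary2.2} and evaluating the Dedekind-sum reciprocities correctly; the rest is a mechanical substitution. Alternatively one could observe that $A\big\vert_0 W$ and $A'$ are both modular functions on $\Gamma_0(98)$ (for $A'$ via Ligozat's criterion) differing by the constant $C$, and then match a single $q$-coefficient, but this would force one to expand $A$ at the cusp $W(\mathrm{i}\infty)=25/49$, which is no cheaper than evaluating the four roots of unity head-on.
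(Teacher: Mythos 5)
Your proposal is correct and follows exactly the route the paper takes: the paper's own proof simply verifies that $W$ is an Atkin--Lehner involution of $\Gamma_0(98)$ and then asserts the identity ``by Lemma \ref{corollary2.2},'' leaving the eta-by-eta computation implicit. You have filled in precisely that computation, and your values check out: $\delta=1,2,1,2$ for $t=1,2,49,98$, the automorphy factors cancel since $6+7-7-6=0$, and the multiplier exponents $\tfrac{a+d}{12c}-s(d,c)-\tfrac14$ evaluate to $-2,-4,2,4$ for $M_1,M_2,M_{49},M_{98}$ respectively, so $C=1$.
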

\begin{proof}
	Check that $ e=2 \mid N=98 $ and $\det{(W_e)}=e=2$. According to Definiton \ref{deff:Atkin}, $W$ is an Atkin-Lehner involution of $\Gamma_{0}(98)$. Then  we can calculate that
	\begin{align*}
		A \big\vert W
		= \frac{\eta_2^6 \eta_{49}^7 }{\eta_1^7 \eta_{98}^6}
		\bigg\vert W
		= \frac{\eta_1^6 \eta_{98}^7 }{\eta_2^7 \eta_{49}^6}
		=  A'
	\end{align*}
	by Lemma \ref{corollary2.2}.
\end{proof}

\begin{lemma} $($\cite[Lemma 6]{UWandWU}$)$ \label{lemma:UWandWU}
	Let $p$ be prime, $p \mid N$, $e \parallel N$ and $(p,e)=1$. If $f(\tau)$ is a modular function on $\Gamma_{0}(N)$ then
	\begin{align*}
		U_p( f ) \big\vert W_e = U_p\big( f \vert W_e \big).
	\end{align*}
\end{lemma}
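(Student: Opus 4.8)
The plan is to reduce the identity to a bookkeeping statement about left cosets of $\Gamma_0(N)$, exploiting that $f$ has weight $0$, so that the automorphy factor is trivial and no eta-multipliers or branch choices intervene. Put $B_\lambda:=\tbtmat{1}{\lambda}{0}{p}$. Since at weight $0$ one has $g\vert_0 M(\tau)=g\!\left(\tfrac{a\tau+b}{c\tau+d}\right)$ for $M=\tbtmat{a}{b}{c}{d}\in\glpQ$, and this depends on $M$ only up to positive scalar multiples,
\begin{align*}
	U_p(g)=\frac1p\sum_{\lambda=0}^{p-1}g\vert_0 B_\lambda,
\end{align*}
and if $g$ is $\Gamma_0(N)$-invariant then $g\vert_0 B_\lambda$ depends only on the coset $\Gamma_0(N)B_\lambda$. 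The heart of the proof is the claim that there exist a permutation $\sigma$ of $\{0,1,\dots,p-1\}$ and matrices $\gamma_\lambda\in\Gamma_0(N)$ with
\begin{align*}
	B_\lambda W_e=\gamma_\lambda\,W_e\,B_{\sigma(\lambda)},\qquad\lambda=0,1,\dots,p-1.
\end{align*}

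To prove the claim, recall that an Atkin--Lehner matrix normalises $\Gamma_0(N)$, so $W_e\Gamma_0(N)=\Gamma_0(N)W_e$ as subsets of $\glpQ$; and, because $p\mid N$, the double coset $\Gamma_0(N)\tbtmat{1}{0}{0}{p}\Gamma_0(N)$ is the disjoint union of the $p$ left cosets $\Gamma_0(N)B_0,\dots,\Gamma_0(N)B_{p-1}$. Hence
\begin{align*}
	\bigsqcup_{\mu=0}^{p-1}\Gamma_0(N)\,W_e B_\mu
	=W_e\,\Gamma_0(N)\tbtmat{1}{0}{0}{p}\Gamma_0(N)
	=\Gamma_0(N)\,W_e\tbtmat{1}{0}{0}{p}\,\Gamma_0(N),
\end{align*}
while $\bigsqcup_{\lambda}\Gamma_0(N)B_\lambda W_e=\Gamma_0(N)\tbtmat{1}{0}{0}{p}\Gamma_0(N)\,W_e=\Gamma_0(N)\tbtmat{1}{0}{0}{p}W_e\,\Gamma_0(N)$. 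These two families of $p$ cosets agree — and then $\sigma$ and the $\gamma_\lambda$ are read off from the coincidence — if and only if $C:=W_e^{-1}\tbtmat{1}{0}{0}{p}W_e$ lies in $\Gamma_0(N)\tbtmat{1}{0}{0}{p}\Gamma_0(N)$. Writing $W_e=\tbtmat{ae}{b}{cN}{de}$ with $\det W_e=ade^2-bcN=e$ (Definition~\ref{deff:Atkin}), a short matrix computation shows that $C$ is integral, has determinant $p$, has lower-left entry divisible by $N$, and has upper-left entry $\equiv ade\pmod p$. Reducing $ade^2-bcN=e$ modulo $N$, hence modulo $p$, gives $ade\equiv 1\pmod p$ \emph{precisely} when $p\nmid e$; so under the hypothesis $(p,e)=1$ the upper-left entry of $C$ is a unit modulo $p$, and a one-step elimination (solving $\gamma C=B_j$ for $j\bmod p$ and $\gamma$, which is possible exactly because $C_{11}$ is a unit mod $p$ and $p\mid C_{21}$) exhibits $j\in\{0,\dots,p-1\}$ and $\gamma\in\Gamma_0(N)$ with $C\in\Gamma_0(N)B_j$, placing $C$ in the required double coset.

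Granting the claim, the lemma drops out by reindexing the sum. Using $f\vert_0\gamma_\lambda=f$,
\begin{align*}
	U_p(f)\vert W_e
	&=\frac1p\sum_{\lambda}\big(f\vert_0 B_\lambda\big)\vert_0 W_e
	=\frac1p\sum_{\lambda}f\vert_0\big(B_\lambda W_e\big)
	=\frac1p\sum_{\lambda}f\vert_0\big(\gamma_\lambda W_e B_{\sigma(\lambda)}\big)\\
	&=\frac1p\sum_{\lambda}\big(f\vert_0\gamma_\lambda\big)\vert_0\big(W_e B_{\sigma(\lambda)}\big)
	=\frac1p\sum_{\lambda}\big(f\vert_0 W_e\big)\vert_0 B_{\sigma(\lambda)}
	=\frac1p\sum_{\mu}\big(f\vert W_e\big)\vert_0 B_\mu
	=U_p\big(f\vert W_e\big),
\end{align*}
the penultimate equality being the substitution $\mu=\sigma(\lambda)$.

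The main obstacle is the membership $C\in\Gamma_0(N)\tbtmat{1}{0}{0}{p}\Gamma_0(N)$ — that is, isolating the congruence invariant that separates this double coset from that of $\tbtmat{p}{0}{0}{1}$ modulo $\Gamma_0(N)$ (the two merge when $p\nmid N$, so the whole splitting is special to $p\mid N$) and checking that it survives conjugation by $W_e$. This is the only step where the hypothesis $(p,e)=1$ is used, and it is genuinely necessary: if $p\mid e$ then the upper-left entry of $C$ is divisible by $p$, $C$ falls into the other double coset, and $U_p$ and $W_e$ fail to commute. Everything else — the triviality of the weight-$0$ cocycle, the coset decomposition attached to $U_p$, and the reindexing — is routine.
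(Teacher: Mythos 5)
Your proof is correct. Note that the paper itself does not prove this lemma at all --- it is imported as \cite[Lemma 6]{UWandWU} --- so there is no internal argument to compare against; what you give is the standard normalizer/double-coset proof of the commutation of $U_p$ with an Atkin--Lehner involution, and the computations check out: with $W_e=\tbtmat{ae}{b}{cN}{de}$ and $ade^2-bcN=e$ one finds
\begin{equation*}
C=W_e^{-1}\tbtmat{1}{0}{0}{p}W_e=\tbtmat{\,p-(p-1)ade\,}{\,bd(1-p)\,}{\,acN(p-1)\,}{\,(p-1)ade+1\,},
\end{equation*}
which is integral of determinant $p$ with $N\mid C_{21}$ and $C_{11}\equiv ade\equiv 1\pmod p$ exactly when $(p,e)=1$. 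The one point you leave implicit is the integrality of the $(1,1)$-entry of $\gamma=B_jC^{-1}$, namely $(C_{22}-jC_{21})/p$, which requires $p\mid C_{22}$ in addition to $p\mid C_{21}$; this does follow from your stated facts (from $\det C=p$, $p\mid C_{21}$ and $C_{11}$ a unit mod $p$, or directly from $C_{22}\equiv 1-ade\equiv 0\pmod p$), so there is no gap.
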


\begin{rema} \label{rema98}
	We can check that $A$ is a modular function on $\Gamma_{0}(98)$ by \cite[Theorem 24]{nm59}. Let $p$ be prime. It is a basic fact that if $f$ is a modular function on $\Gamma_0(pN)$ and $p \mid N$, then $U_p(f)$ is a modular function on $\Gamma_0(N)$. Hence, by induction of $\alpha$, $L_\alpha$ is a modular function on $\Gamma_0(14)$ for all $\alpha \geq 0$.
\end{rema}

\begin{lemma}  \label{lemma:K=LW}
	For $\alpha \in \mathbb{N}$
	\begin{align*}	
		K_\alpha = L_\alpha \big\vert W.
	\end{align*}
\end{lemma}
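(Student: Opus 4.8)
The plan is to prove the identity by induction on $\alpha$, feeding the recursive definitions of $K_\alpha$ and $L_\alpha$ into three facts already established: Lemma \ref{lemma:A'=AW}, which says $A'=A\vert W$; Lemma \ref{lemma:UWandWU}, which says that $U_7$ commutes with $\vert W$ on modular functions whose level is divisible by $7$ and prime to $2$; and Remark \ref{rema98}, which records that $A$ is a modular function on $\Gamma_0(98)$ and that each $L_\alpha$ is a modular function on $\Gamma_0(14)$, hence also on $\Gamma_0(98)$ since $\Gamma_0(98)\subseteq\Gamma_0(14)$. I will also use that the weight-zero slash is multiplicative: for $W\in\glpQ$ one has $f\vert_0 W(\tau)=f(W\tau)$, so $(fg)\vert_0 W=(f\vert_0 W)(g\vert_0 W)$.

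For the base case, $K_0=1=L_0$ and $1\vert W=1$, so the claim holds at $\alpha=0$. Assume now $K_\alpha=L_\alpha\vert W$. If $\alpha=2\beta$ is even, then by the definition of $K_{2\beta+1}$, the inductive hypothesis, Lemma \ref{lemma:A'=AW}, and multiplicativity of $\vert_0 W$,
\begin{align*}
	K_{2\beta+1}=U_7\bigl(A'K_{2\beta}\bigr)=U_7\bigl((A\vert W)(L_{2\beta}\vert W)\bigr)=U_7\bigl((AL_{2\beta})\vert W\bigr).
\end{align*}
Since $AL_{2\beta}$ is a modular function on $\Gamma_0(98)$, $7\mid 98$, $2\parallel 98$ and $(7,2)=1$, Lemma \ref{lemma:UWandWU} gives $U_7\bigl((AL_{2\beta})\vert W\bigr)=U_7(AL_{2\beta})\vert W=L_{2\beta+1}\vert W$, as desired. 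If $\alpha=2\beta+1$ is odd, then by the definition of $K_{2\beta+2}$ and the inductive hypothesis,
\begin{align*}
	K_{2\beta+2}=U_7\bigl(K_{2\beta+1}\bigr)=U_7\bigl(L_{2\beta+1}\vert W\bigr),
\end{align*}
and since $L_{2\beta+1}$ is a modular function on $\Gamma_0(98)$, Lemma \ref{lemma:UWandWU} again yields $U_7\bigl(L_{2\beta+1}\vert W\bigr)=U_7(L_{2\beta+1})\vert W=L_{2\beta+2}\vert W$. This completes the induction.

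The arguments in each step are elementary; the only point that genuinely requires care is the level bookkeeping, namely verifying at every stage that the function to which Lemma \ref{lemma:UWandWU} is applied ($AL_{2\beta}$ in the odd step, $L_{2\beta+1}$ in the even step) is really a modular function on some $\Gamma_0(N)$ with $7\mid N$, $2\parallel N$ and $(7,2)=1$, and that the fixed matrix $W$ is an Atkin-Lehner involution $W_2$ of that $\Gamma_0(N)$. Working uniformly with $N=98$ — legitimate because invariance under $\Gamma_0(14)$ forces invariance under the subgroup $\Gamma_0(98)$ and preserves meromorphy at the cusps — sidesteps this concern, and the requisite modularity statements are precisely those collected in Remark \ref{rema98}, together with the fact noted in the proof of Lemma \ref{lemma:A'=AW} that $W$ is an Atkin-Lehner involution $W_2$ of $\Gamma_0(98)$.
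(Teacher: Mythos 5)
Your proof is correct and follows essentially the same route as the paper: induction on $\alpha$, using Lemma \ref{lemma:A'=AW}, Lemma \ref{lemma:UWandWU}, and Remark \ref{rema98} at exactly the same points. Your explicit level bookkeeping (passing from $\Gamma_0(14)$-invariance of $L_\alpha$ to $\Gamma_0(98)$-invariance so that Lemma \ref{lemma:UWandWU} applies with $N=98$) is a detail the paper leaves implicit, but it is the same argument.
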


\begin{proof}
	$K_0=L_0 \big\vert W$. Suppose for a fixed $\beta \in \mathbb{N}$, we have $K_{2\beta} = L_{2\beta} \big\vert W$, then
	\begin{align*}
		L_{2\beta+1} \big\vert W = U_7 \big( A L_{2\beta} \big) \big\vert W = U_7 \big( A L_{2\beta} \big\vert W \big)
		= U_7 \big( A' K_{2\beta}  \big) = K_{2\beta+1}.
	\end{align*}
	The second equation is based on Lemma \ref{lemma:UWandWU} and Remark \ref{rema98}. The third equation is based on Lemma \ref{lemma:A'=AW}. Suppose for a fixed $\beta \in \mathbb{N}$, we have $K_{2\beta+1} = L_{2\beta+1} \big\vert W$, then
	\begin{align*}
		L_{2\beta+2} \big\vert W = U_7 \big(  L_{2\beta+1} \big) \big\vert W = U_7 \big( L_{2\beta+1} \big\vert W \big)
		= U_7 \big( K_{2\beta+1}  \big) = K_{2\beta+2}.
	\end{align*}
	Hence for all $\alpha \in \mathbb{N}$, $K_\alpha = L_\alpha \big\vert W$.
\end{proof}

Now define
\begin{align*}
	\overline{t}:=t \big\vert W
	= \frac{\eta_{14}^4}{\eta_2^4},
	\ \ \ \
	\overline{p_1}:=p_1\big\vert W
	=\frac{\eta_{7}^4\eta_2^4}{\eta_{14}^4\eta_1^4},
	\ \ \ \
	\overline{p_2}:=p_2\big\vert W
	=8\frac{\eta_{14}^3\eta_2^3}{\eta_{7}^3\eta_1^3}-\frac{\eta_{7}^4\eta_2^4}{\eta_{14}^4\eta_1^4},
\end{align*}
and  {\small
	\begin{align*}
		\overline{X^{(0)}} :=\big\{ f \big\vert W : f \in X^{(0)} \big\}&= \bigg\{
		\ \  \ \ \  r_1(0) +7r_1(1) \ \overline{t} \ \ +
		\sum_{n=2}^\infty r_1(n)  7^{ \lfloor \frac{7n-2}{4} \rfloor } \overline{t}^n \\
		&  \quad \quad \quad \quad \quad \quad
		+7r_2(1)\overline{p_1}\overline{t}+
		\sum_{n=2}^\infty r_2(n)  7^{ \lfloor \frac{7n-2}{4} \rfloor } \overline{p_1} \overline{t}^n \\
		&  \quad \quad \quad \quad \quad \quad
		+r_3(1)\overline{p_2}\overline{t}+
		\sum_{n=2}^\infty r_3(n)  7^{ \lfloor \frac{7n-6}{4} \rfloor } \overline{p_2} \overline{t}^n
		\bigg\}, \\
		\overline{X^{(1)}} :=\big\{ f \big\vert W : f \in X^{(1)} \big\}&=\bigg\{
		\quad \quad \quad \quad \quad
		+\ r_1(1) \ \overline{t} \  +
		\sum_{n=2}^\infty r_1(n)  7^{ \lfloor \frac{7n-7}{4} \rfloor } \overline{t}^n \\
		&  \ \ \ \ \ \ \ \ \ +r_2(0)\overline{p_1}+r_2(1)\overline{p_1}\overline{t}+
		\sum_{n=2}^\infty r_2(n)  7^{ \lfloor \frac{7n-7}{4} \rfloor } \overline{p_1} \overline{t}^n \\
		&  \ \ \ \ \ \ \ \ \ +r_3(0)\overline{p_2}+r_3(1)\overline{p_2}\overline{t}+
		\sum_{n=2}^\infty r_3(n)  7^{ \lfloor \frac{7n-11}{4} \rfloor } \overline{p_2} \overline{t}^n
		\bigg\},
\end{align*} }
where $r_1,r_2,r_3:\mathbb{Z} \to \mathbb{Z}$ are arbitrarily selected discrete functions.

\begin{thm} \label{thm:K=7g}
	For each $\alpha \ge 1$, there exist $g_{2 \alpha-1 }\in \overline{X^{(1)}}$ and $g_{2 \alpha }\in \overline{X^{(0)}}$ such that
	\begin{align*}
		&K_{2 \alpha-1}= 7^{\alpha} g_{2\alpha-1}, \\
		&K_{2 \alpha}= 7^{\alpha} g_{2\alpha}. \
	\end{align*}
\end{thm}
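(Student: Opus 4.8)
The plan is to obtain Theorem~\ref{thm:K=7g} essentially for free from the $c\psi_{4,1}$ analysis of Section~\ref{section:2}, transported across the Atkin--Lehner involution $W$. The two inputs are Corollary~\ref{r:col}, which records both the $7$-power divisibility of the $L_\alpha$ and the structural membership in $X^{(0)}$ or $X^{(1)}$, and Lemma~\ref{lemma:K=LW}, which identifies $K_\alpha = L_\alpha\big\vert W$ for every $\alpha$.

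First I would fix $\alpha\ge 1$ and invoke Corollary~\ref{r:col} to write $L_{2\alpha-1}=7^\alpha f_{2\alpha-1}$ with $f_{2\alpha-1}\in X^{(1)}$ and $L_{2\alpha}=7^\alpha f_{2\alpha}$ with $f_{2\alpha}\in X^{(0)}$. The slash operator $f\mapsto f\big\vert W = f\big\vert_0 W$ is $\mathbb{C}$-linear --- having weight $0$, it is simply precomposition with the M\"obius map $\tau\mapsto W\tau$, hence also multiplicative --- so applying it to these identities and using Lemma~\ref{lemma:K=LW} gives
\begin{align*}
	K_{2\alpha-1}=L_{2\alpha-1}\big\vert W=7^\alpha\big(f_{2\alpha-1}\big\vert W\big),\qquad
	K_{2\alpha}=L_{2\alpha}\big\vert W=7^\alpha\big(f_{2\alpha}\big\vert W\big).
\end{align*}
Setting $g_{2\alpha-1}:=f_{2\alpha-1}\big\vert W$ and $g_{2\alpha}:=f_{2\alpha}\big\vert W$, the definitions $\overline{X^{(1)}}=\{f\vert W:f\in X^{(1)}\}$ and $\overline{X^{(0)}}=\{f\vert W:f\in X^{(0)}\}$ put $g_{2\alpha-1}\in\overline{X^{(1)}}$ and $g_{2\alpha}\in\overline{X^{(0)}}$, which is exactly the assertion.

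The only point needing a line of justification --- routine rather than a genuine obstacle --- is that the two descriptions of $\overline{X^{(0)}}$ and $\overline{X^{(1)}}$ in the text agree: the abstract one as the image of $X^{(\bullet)}$ under $f\mapsto f\vert W$, and the concrete one written in terms of $\overline{t}=t\vert W$, $\overline{p_1}=p_1\vert W$, $\overline{p_2}=p_2\vert W$. This holds because $f\mapsto f\big\vert W$ distributes over the series defining $X^{(0)},X^{(1)}$ and, being multiplicative, sends $t^n\mapsto\overline{t}^{\,n}$, $p_1t^n\mapsto\overline{p_1}\,\overline{t}^{\,n}$, $p_2t^n\mapsto\overline{p_2}\,\overline{t}^{\,n}$; the explicit eta-quotient forms of $\overline{t},\overline{p_1},\overline{p_2}$ are obtained from Lemma~\ref{corollary2.2} exactly as in the computation of $A'=A\vert W$ in Lemma~\ref{lemma:A'=AW}. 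Thus no genuinely new computation is required: Theorem~\ref{thm:K=7g} is simply the $W$-image of Corollary~\ref{r:col}, and from it Theorem~\ref{thm:cpsi4012} for $\beta=0,2$ follows by reading the $q$-expansions of the $K_\alpha$ off \eqref{K2alpha-1}--\eqref{K2alpha} and comparing the relevant residues through \eqref{r:lam0}.
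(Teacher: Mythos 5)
Your proposal is correct and follows exactly the paper's own argument: apply Corollary \ref{r:col} to write $L_{2\alpha-1}=7^{\alpha}f_{2\alpha-1}$ and $L_{2\alpha}=7^{\alpha}f_{2\alpha}$, then use Lemma \ref{lemma:K=LW} and the linearity of $f\mapsto f\vert W$ to set $g_{\bullet}:=f_{\bullet}\vert W$, which lies in $\overline{X^{(1)}}$ or $\overline{X^{(0)}}$ by definition. The extra remark on reconciling the abstract and concrete descriptions of $\overline{X^{(0)}},\overline{X^{(1)}}$ is a sound but inessential addition.
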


\begin{proof}
	By Corollary \ref{r:col}, for each $\alpha \ge 1$, there exist $f_{2 \alpha-1 }\in X^{(1)}$ and $f_{2 \alpha }\in X^{(0)}$ such that
	\begin{align*}
		L_{2 \alpha-1}= 7^{\alpha} f_{2\alpha-1}, \quad \quad
		L_{2 \alpha}= 7^{\alpha} f_{2\alpha}. \
	\end{align*}
	Then by Lemma \ref{lemma:K=LW},
	\begin{align*}
		K_{2 \alpha-1} = 7^{\alpha} f_{2\alpha-1} \big\vert W = 7^{\alpha} g_{2\alpha-1},
		\quad \quad
		K_{2 \alpha} = 7^{\alpha} f_{2\alpha} \big\vert W = 7^{\alpha} g_{2\alpha}. \
	\end{align*}
\end{proof}

Combining \eqref{K2alpha-1}, \eqref{K2alpha} and Theorem \ref{thm:K=7g}, we conclude the proof of Theorems \ref{thm:cpsi40} and \ref{thm:cpsi42}.

\begin{thm} \label{thm:cpsi40}
	For $\alpha \in \mathbb{N}^*$ and $n \in \mathbb{N}$, we have
	\begin{align*}
		c\psi_{4,0}(7^\alpha n+\lambda_\alpha^{(4,0)}) \equiv 0 \pmod{7^{\lfloor\frac{\alpha+1}{2}\rfloor}},
	\end{align*}
	where $\lambda_\alpha^{(4,0)}$ defined by \eqref{r:lam0}.
\end{thm}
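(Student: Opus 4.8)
The plan is to read the two congruence families directly off the identities \eqref{K2alpha-1}--\eqref{K2alpha}, using the $7$-divisibility of $K_m$ furnished by Theorem \ref{thm:K=7g}. I would first restate that theorem uniformly: for every $m\ge 1$ one has $K_m=7^{\lfloor(m+1)/2\rfloor}g_m$, with $g_m\in\overline{X^{(1)}}$ when $m$ is odd and $g_m\in\overline{X^{(0)}}$ when $m$ is even.

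Next I would put the two prefactors occurring in \eqref{K2alpha-1}--\eqref{K2alpha} into eta-quotient form. By Lemma \ref{lemma012} we have $C\Psi_{4,2}(q^2)-qC\Psi_{4,0}(q^2)=q^{1/3}\eta_1^6/\eta_2^7$, hence
\[
\frac{q}{C\Psi_{4,2}(q^2)-qC\Psi_{4,0}(q^2)}=q^{2/3}\frac{\eta_2^7}{\eta_1^6}=q\prod_{j\ge 1}\frac{(1-q^{2j})^7}{(1-q^{j})^6},
\]
and, applying $\tau\mapsto 7\tau$ to Lemma \ref{lemma012},
\[
\frac{q^3}{C\Psi_{4,2}(q^{14})-q^7C\Psi_{4,0}(q^{14})}=q^{2/3}\frac{\eta_{14}^7}{\eta_7^6}=q^3\prod_{j\ge 1}\frac{(1-q^{14j})^7}{(1-q^{7j})^6}.
\]
Each of these is a power series over $\mathbb{Z}$ whose leading coefficient (of $q$, resp.\ $q^3$) equals $1$, so its reciprocal is again a Laurent series over $\mathbb{Z}$.

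Third, since the generators $\overline{t}=\eta_{14}^4/\eta_2^4$, $\overline{p_1}=\eta_7^4\eta_2^4/(\eta_{14}^4\eta_1^4)$ and $\overline{p_2}=8\,\eta_{14}^3\eta_2^3/(\eta_7^3\eta_1^3)-\eta_7^4\eta_2^4/(\eta_{14}^4\eta_1^4)$ are $q$-integral eta-quotients (their leading $q$-exponents being $2$, $-1$, $-1$), every element of $\overline{X^{(0)}}$ and of $\overline{X^{(1)}}$ has an integer $q$-expansion; together with the restated Theorem \ref{thm:K=7g}, this shows that $K_m$ is $7^{\lfloor(m+1)/2\rfloor}$ times a $q$-integral Laurent series. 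Now multiply \eqref{K2alpha-1} (when $m$ is odd) or \eqref{K2alpha} (when $m$ is even) by the reciprocal of the corresponding prefactor: the left-hand side stays $7^{\lfloor(m+1)/2\rfloor}$ times a $q$-integral series, while the right-hand side becomes the honest power series
\[
\sum_{n\ge 0}c\psi_{4,2}\!\big(7^{m}n+\lambda^{(4,2)}_{m}\big)q^{2n+1}-\sum_{n\ge 0}c\psi_{4,0}\!\big(7^{m}n+\lambda^{(4,0)}_{m}\big)q^{2n}.
\]
Comparing coefficients of $q^{2n}$ gives $c\psi_{4,0}(7^{m}n+\lambda^{(4,0)}_{m})\equiv 0\pmod{7^{\lfloor(m+1)/2\rfloor}}$ for all $n\ge 0$, which is exactly the theorem upon taking $m=\alpha$; comparing coefficients of $q^{2n+1}$ yields the companion statement for $c\psi_{4,2}$, i.e.\ Theorem \ref{thm:cpsi42}, with no extra work.

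I do not anticipate any genuine obstacle at this stage: the substantive content is already contained in Corollary \ref{r:col} and Theorem \ref{thm:K=7g} (the $7$-power divisibility of $K_m$) and in the identities \eqref{K2alpha-1}--\eqref{K2alpha} (identifying the even- and odd-power coefficients of $K_m$, up to a prefactor, with shifted values of $c\psi_{4,0}$ and $c\psi_{4,2}$). The only routine point requiring care is checking that passing from $K_m$ to the bracketed series by dividing out the eta-quotient prefactor introduces no factor of $7$ into the denominators — which is immediate from the explicit product expansions above — and keeping the two parities of $m$ straight throughout.
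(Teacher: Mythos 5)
Your argument is correct and is exactly the route the paper takes: it combines \eqref{K2alpha-1}--\eqref{K2alpha} with Theorem \ref{thm:K=7g} and reads off the congruences from the coefficients of $q^{2n}$ (the paper compresses this into a single sentence, while you supply the routine verification that the eta-quotient prefactors and the generators $\overline{t},\overline{p_1},\overline{p_2}$ are $q$-integral with unit leading coefficients, so no powers of $7$ are lost in dividing out the prefactor). No gaps.
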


%
%	\begin{thm} \label{thm:cpsi40}
	%		For $\alpha \in \mathbb{N}^*$, and $n \in \mathbb{N}^*$ such that $-8 \equiv 24n \pmod{7^{2\alpha-1}}$, we
	%		have
	%		\begin{align*}
		%			c\psi_{4,0}(n) \equiv 0 \pmod{7^{\alpha}}.
		%		\end{align*}
	%	\end{thm}

\begin{thm} \label{thm:cpsi42}
	For $\alpha \in \mathbb{N}^*$ and $n \in \mathbb{N}$, we have
	\begin{align*}
		c\psi_{4,2}(7^\alpha n+\lambda_\alpha^{(4,2)}) \equiv 0 \pmod{7^{\lfloor\frac{\alpha+1}{2}\rfloor}},
	\end{align*}
	where $\lambda_\alpha^{(4,2)}$ defined by \eqref{r:lam0}.
\end{thm}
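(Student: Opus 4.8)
The plan is to read Theorem~\ref{thm:cpsi42} directly off the two descriptions of the sequence $\{K_\alpha\}$ already in hand: the explicit $q$-expansions \eqref{K2alpha-1}--\eqref{K2alpha}, and the $7$-divisibility of Theorem~\ref{thm:K=7g}. The whole argument is to reconcile these and then separate the resulting congruence according to the parity of the exponent of $q$; it is the mirror of the passage from Corollary~\ref{r:col} to Theorem~\ref{thm:cpsi41} in Section~\ref{section:2}.

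First I would note that every element of $\overline{X^{(0)}}$ and $\overline{X^{(1)}}$ has a $q$-expansion with integer coefficients. Indeed, writing $\overline{t}=\eta_{14}^4/\eta_2^4$, $\overline{p_1}=\eta_7^4\eta_2^4/(\eta_{14}^4\eta_1^4)$ and $\overline{p_2}=8\,\eta_{14}^3\eta_2^3/(\eta_7^3\eta_1^3)-\overline{p_1}$ as $q$-expansions, each is a monomial in $q$ times a quotient of products $\prod_{j\ge1}(1-q^{aj})$, hence lies in $\numZ((q))$; and the scalars in the definitions of $\overline{X^{(0)}},\overline{X^{(1)}}$ are integers times \emph{nonnegative} powers of $7$. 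Therefore Theorem~\ref{thm:K=7g} says precisely that the $q$-expansions of $K_{2\alpha-1}$ and $K_{2\alpha}$ have every coefficient divisible by $7^{\alpha}$.

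Next I would pin down the prefactors in \eqref{K2alpha-1}--\eqref{K2alpha}. By Lemma~\ref{lemma012}, $C\Psi_{4,2}(q^{2})-q\,C\Psi_{4,0}(q^{2})=q^{1/3}\eta_1^6/\eta_2^7=\prod_{j\ge1}(1-q^{j})^{6}/(1-q^{2j})^{7}$; replacing $q$ by $q^{7}$ gives $C\Psi_{4,2}(q^{14})-q^{7}C\Psi_{4,0}(q^{14})=\prod_{j\ge1}(1-q^{7j})^{6}/(1-q^{14j})^{7}$. Hence the prefactor in \eqref{K2alpha} equals $q\prod_{j\ge1}(1-q^{2j})^{7}/(1-q^{j})^{6}$ and the prefactor in \eqref{K2alpha-1} equals $q^{3}\prod_{j\ge1}(1-q^{14j})^{7}/(1-q^{7j})^{6}$, each being a power of $q$ times a unit of $\numZ[[q]]$. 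Dividing $K_{2\alpha-1}$ (resp.\ $K_{2\alpha}$) by this unit and the monomial, and using the previous paragraph together with the fact that the bracketed combination of generating series in \eqref{K2alpha-1} (resp.\ \eqref{K2alpha}) is an honest power series in $q$, I conclude that this combination lies in $7^{\alpha}\numZ[[q]]$.

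It then remains to extract the individual congruences. In the bracket of \eqref{K2alpha-1} the summands $c\psi_{4,2}(7^{2\alpha-1}n+\lambda^{(4,2)}_{2\alpha-1})q^{2n+1}$ carry only odd powers of $q$ and the summands $c\psi_{4,0}(7^{2\alpha-1}n+\lambda^{(4,0)}_{2\alpha-1})q^{2n}$ only even powers, so reading off the coefficient of $q^{2n+1}$ gives $c\psi_{4,2}(7^{2\alpha-1}n+\lambda^{(4,2)}_{2\alpha-1})\equiv0\pmod{7^{\alpha}}$ for every $n$, and the same with \eqref{K2alpha} gives $c\psi_{4,2}(7^{2\alpha}n+\lambda^{(4,2)}_{2\alpha})\equiv0\pmod{7^{\alpha}}$. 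Renaming the outer index and using $\lfloor(\alpha+1)/2\rfloor=\alpha$ for both $\alpha\mapsto 2\alpha-1$ and $\alpha\mapsto2\alpha$, this is exactly Theorem~\ref{thm:cpsi42} (and reading off the even coefficients gives Theorem~\ref{thm:cpsi40}). I do not expect a real obstacle here: the serious content was already discharged in Theorem~\ref{thm:K=7g} and the transfer $K_\alpha=L_\alpha|W$ of Lemma~\ref{lemma:K=LW}. The only step that wants a little care is the last one --- checking that the shifts by powers of $q$ in the prefactors leave the residue classes $7^{\alpha}n+\lambda^{(4,2)}_{\alpha}$ and $7^{\alpha}n+\lambda^{(4,0)}_{\alpha}$ exactly as recorded in \eqref{r:lam0}.
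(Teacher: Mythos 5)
Your proposal is correct and follows exactly the route the paper takes: it combines the $q$-expansions \eqref{K2alpha-1}--\eqref{K2alpha} with the $7^{\alpha}$-divisibility from Theorem~\ref{thm:K=7g}, notes that the prefactors are powers of $q$ times units of $\numZ[[q]]$ and that elements of $\overline{X^{(0)}},\overline{X^{(1)}}$ have integral Laurent expansions, and then separates the odd and even powers of $q$ to extract the congruences. The paper states this step in one sentence; you have merely supplied the routine details it leaves implicit.
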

%
%	\begin{thm} \label{thm:cpsi42}
	%		For $\alpha \in \mathbb{N}^*$, and $n \in \mathbb{N}^*$ such that $4 \equiv 24n \pmod{7^{2\alpha-1}}$, we
	%		have
	%		\begin{align*}
		%			c\psi_{4,2}(n) \equiv 0 \pmod{7^{\alpha}}.
		%		\end{align*}
	%	\end{thm}
Theorems \ref{thm:cpsi41}, \ref{thm:cpsi40}, and \ref{thm:cpsi42} together provide Theorem \ref{thm:cpsi4012}.

\appendix

\section*{Appendix}

{\tiny 	
	Group I:
	\begin{align*}
		U_7(At^0)
		&=\quad \quad 16\cdot7^2\cdot t+7148\cdot7^2\cdot t^2+1536\cdot 7^5\cdot t^3+921\cdot7^7\cdot t^4+40\cdot 7^{10}\cdot t^5+324\cdot 7^{10}\cdot t^6+4\cdot 7^{13}\cdot t^7+7^{14}\cdot t^8\\
		&+{p_1} \big( \quad 2\cdot 7^2+
		6161\cdot7\cdot t+9045\cdot 7^3\cdot t^2+36870\cdot 7^4\cdot t^3+1551\cdot 7^7\cdot t^4+248\cdot 7^9\cdot t^5+145\cdot 7^{10}\cdot t^6+5\cdot 7^{12}\cdot t^7
		\quad \big)  \\
		&+{p_2} \big(\quad
		2\cdot 7^2+6273\cdot 7\cdot t+70359\cdot 7^2\cdot t^2+330714\cdot 7^3\cdot t^3+16935\cdot 7^6\cdot t^4+3540\cdot 7^8\cdot t^5+3053\cdot 7^9\cdot t^6
		\\
		&\quad \quad \quad
		+207\cdot 7^{11}\cdot t^7+6\cdot 7^{13}\cdot t^8
		\quad  \big)
		\\			
		U_7(At^{-1})		
		&=\quad \quad \ 7^2\cdot t \\
		&+{p_1} \big( \quad		
		6 \quad	\big)  \\
		&+{p_2} \big( \quad 6+48\cdot t	 \quad	\big)
		\\
		U_7(At^{-2})		
		&=\quad \quad -8\cdot 7\cdot t+7^3\cdot t^2\\
		&+{p_1} \big( \quad	-8	-2\cdot 7\cdot t
		\quad	\big)\\
		&+{p_2} \big(\quad -8-78\cdot t \quad	\big)
		\\
		U_7(At^{-3})		
		&=\quad \quad
		\ 150\cdot 7\cdot t+24\cdot 7^3\cdot t^2+7^5\cdot t^3
		\\
		&+{p_1} \big(\quad
		20\cdot 7+30\cdot 7^2\cdot t+10\cdot 7^3\cdot t^2
		\quad	\big)
		\\
		&+{p_2} \big(\quad 20\cdot 7
		+370\cdot 7\cdot t
		+302\cdot 7^2\cdot t^2+12\cdot 7^4\cdot t^3  \quad	\big)
		\\		
		U_7(At^{-4})
		&=\quad \quad \ 8-24\cdot 7^3\cdot t-160\cdot 7^3\cdot t^2-7^7\cdot t^4
		\\
		&+{p_1} \big(\quad	t^{-1}-156\cdot 7-191\cdot7^2\cdot t+7^5\cdot t^3
		\quad	\big)
		\\
		&+{p_2} \big(\quad  t^{-1}-1084
		-2585\cdot7\cdot t-216\cdot 7^3\cdot t^2
		-7^4\cdot t^3  \quad	\big)
		\\			
		U_7(At^{-5})	
		&=\quad \quad \      -104+144\cdot 7^3\cdot t+32\cdot 7^4\cdot t^2-16\cdot 7^6\cdot t^3+20\cdot 7^7\cdot t^4+7^9\cdot t^5\\
		&+{p_1} \big(\quad -13\cdot t^{-1}+6574+423\cdot 7^2\cdot t	-134\cdot 7^4\cdot t^2+3\cdot 7^6\cdot t^3+6\cdot 7^7\cdot t^4	\quad	\big)\\
		&+{p_2} \big(\quad
		-13\cdot t^{-1}+6470+10475\cdot 7\cdot t-446\cdot 7^3\cdot t^2-129\cdot 7^5\cdot t^3+10\cdot 7^7\cdot t^4+6\cdot 7^8\cdot t^5
		\quad	\big)
		\\			
		U_7(At^{-6})
		&=\quad \quad \  +904-4544\cdot 7^2\cdot t+1066\cdot 7^4\cdot t^2+192\cdot 7^6\cdot t^3-400\cdot 7^7\cdot t^4-40\cdot 7^9\cdot t^5-7^{11}\cdot t^6     \\
		&+{p_1} \big(\quad	113\cdot t^{-1}-4294\cdot 7+5685\cdot 7^2\cdot t+1382\cdot 7^4\cdot t^2-295\cdot 7^6\cdot t^3-46\cdot 7^8\cdot t^4-10\cdot 7^9\cdot t^5	\quad	\big)\\
		&+{p_2} \big(\quad  113\cdot t^{-1}-29154+5427\cdot 7\cdot t+2290\cdot 7^4\cdot t^2-433\cdot 7^5\cdot t^3-654\cdot 7^7\cdot t^4-430\cdot 7^8\cdot t^5-12\cdot 7^{10}\cdot t^6
		\quad	\big)
	\end{align*}
	
	Group II:
	\begin{align*}		
		U_7(A{p_1}t^{0})
		&=\quad \quad \  8 \cdot 7\cdot t+12\cdot 7^3\cdot t^2+24\cdot 7^4\cdot t^3+2\cdot 7^6\cdot t^4   \\
		&+{p_1} \big(\quad	7+79\cdot 7\cdot t+190\cdot 7^2\cdot t^2+24\cdot 7^4\cdot t^3+7^6\cdot t^4	\quad	\big)\\
		&+{p_2} \big(\quad  7+87\cdot 7\cdot t+40\cdot 7^3\cdot t^2+381\cdot 7^3\cdot t^3+33\cdot 7^5\cdot t^4+7^7\cdot t^5   \quad	\big)
		\\			
		U_7(A{p_1}t^{-1})
		&=\quad \quad \  2\cdot 7\cdot t   \\
		&+{p_1} \big(\quad	2+7\cdot t	\quad	\big)\\
		&+{p_2} \big(\quad  2 +23\cdot t+7^2\cdot t^2	\quad	\big)
		\\			
		U_7(A{p_1}t^{-2})
		&=\quad \quad \  -4\cdot 7^2\cdot t-4\cdot 7^3\cdot t^2   \\
		&+{p_1} \big(\quad -26-32\cdot 7\cdot t-7^3\cdot t^2		\quad	\big)\\
		&+{p_2} \big(\quad -26-432\cdot t
		-43\cdot 7^2\cdot t^2-7^4\cdot t^3		\quad	\big)
		\\			
		U_7(A{p_1}t^{-3})
		&=\quad \quad \ 26\cdot 7^2\cdot t+8\cdot 7^3\cdot t^2-4\cdot 7^5\cdot t^3    \\
		&+{p_1} \big( \quad 24\cdot 7+94\cdot 7\cdot t-20\cdot 7^3\cdot t^2	-7^5\cdot t^3	\quad	\big)
		\\
		&+{p_2} \big(\quad 24\cdot 7
		+286\cdot 7\cdot t
		-34\cdot 7^2\cdot t^2
		-31\cdot 7^4\cdot t^3-7^6\cdot t^4		\quad	\big)
		\\			
		U_7(A{p_1}t^{-4})
		&=\quad \quad \   -124\cdot 7^2\cdot t+32\cdot 7^4\cdot t^2+80\cdot 7^5\cdot t^3+2\cdot 7^7\cdot t^4  \\
		&+{p_1} \big( \quad -804+159\cdot 7^2\cdot t+470\cdot 7^3\cdot t^2+24\cdot 7^5\cdot t^3		\quad	\big)
		\\
		&+{p_2} \big(\quad -804
		+194\cdot 7\cdot t
		+652\cdot 7^3\cdot t^2
		+717\cdot 7^4\cdot t^3+26\cdot 7^6\cdot t^4		\quad	\big)
		\\			
		U_7(A{p_1}t^{-5})
		&=\quad \quad \ 8+300\cdot 7^2\cdot t-552\cdot 7^4\cdot t^2-20\cdot 7^7\cdot t^3-36\cdot 7^7\cdot t^4    \\
		&+{p_1} \big( \quad t^{-1}+2012-3153\cdot 7^2\cdot t	-802\cdot 7^4\cdot t^2-29\cdot 7^6\cdot t^3+18\cdot 7^7\cdot t^4+7^9\cdot t^5	\quad	\big)
		\\
		&+{p_2} \big(\quad t^{-1}+2020
		-19769\cdot 7\cdot t
		-9220\cdot 7^3\cdot t^2
		-163\cdot 7^6\cdot t^3
		-16\cdot 7^7\cdot t^4
		+25\cdot 7^8\cdot t^5+7^{10} \cdot t^6		\quad	\big)
		\\	
		U_7(A{p_1}t^{-6})
		&=\quad \quad \  -24\cdot 7+2300\cdot 7^2 \cdot t+6368\cdot 7^4\cdot t^2+1648\cdot 7^6\cdot t^3+146\cdot 7^8\cdot t^4+80\cdot 7^9\cdot t^5+4\cdot 7^{11}\cdot t^6   \\
		&+{p_1} \big( \quad -3\cdot 7\cdot t^{-1}+13854+37400\cdot 7^2\cdot t+9144\cdot 7^4\cdot t^2+675\cdot 7^6\cdot t^3+209\cdot 7^7\cdot t^4+20\cdot 7^9\cdot t^5+7^{11}\cdot t^6		\quad	\big)
		\\
		&+{p_2} \big(\quad -3\cdot 7\cdot t^{-1}+13686+277603\cdot 7\cdot t+106814\cdot 7^3\cdot t^2+15420\cdot 7^5\cdot t^3+1019\cdot 7^7\cdot t^4+425\cdot 7^8\cdot t^5
		\\
		&\quad \quad \quad
		+31\cdot 7^{10}\cdot t^6+7^{12}\cdot t^7	\quad	\big)
	\end{align*}
	
	Group III:
	\begin{align*}
		U_7(A{p_2}t^{0})
		&=\quad \quad \   64\cdot 7\cdot t+12\cdot 7^4\cdot t^2+24\cdot 7^5\cdot t^3+2\cdot 7^7\cdot t^4  \\
		&+{p_1} \big( \quad 8\cdot 7+79\cdot 7^2\cdot t+190\cdot 7^3\cdot t^2+24\cdot 7^5\cdot t^3	+7^7\cdot t^4	\quad	\big)
		\\
		&+{p_2} \big(\quad 8\cdot7
		+617\cdot 7\cdot t
		+40\cdot 7^4\cdot t^2
		+381\cdot 7^4\cdot t^3
		+33\cdot 7^6\cdot t^4+7^8\cdot t^5		\quad	\big)
		\\	
		U_7(A{p_2}t^{-1})
		&=\quad \quad \  -2\cdot 7\cdot t   \\
		&+{p_1} \big( \quad -2-2\cdot 7\cdot t		\quad	\big)
		\\
		&+{p_2} \big(\quad -2-30\cdot t
		-7^2\cdot t^2		\quad	\big)
		\\	
		U_7(A{p_2}t^{-2})
		&=\quad \quad \   36\cdot 7\cdot t+4\cdot 7^3\cdot t^2  \\
		&+{p_1} \big( \quad 36+48\cdot 7\cdot t		\quad	\big)
		\\
		&+{p_2} \big(\quad 36+624\cdot t
		+52\cdot 7^2\cdot t^2+7^4\cdot t^3		\quad	\big)
		\\	
		U_7(A{p_2}t^{-3})
		&=\quad \quad \  8-58 \cdot 7^2\cdot t-48\cdot 7^3\cdot t^2+4\cdot 7^5\cdot t^3   \\
		&+{p_1} \big( \quad t^{-1}-8\cdot 7^2-65\cdot 7^2\cdot t	+20\cdot 7^3\cdot t^2+7^5\cdot t^3	\quad	\big)
		\\
		&+{p_2} \big( \quad t^{-1}-384
		-129\cdot 7^2\cdot t
		-382\cdot 7^2\cdot t^2
		+23\cdot 7^4\cdot t^3+7^6\cdot t^4
		\quad	\big)
		\\
		U_7(A{p_2}t^{-4})
		&=\quad \quad \ -96+412\cdot 7^2\cdot t-16\cdot 7^6\cdot t^3-2\cdot 7^7\cdot t^4    \\
		&+{p_1} \big( \quad -12\cdot t^{-1}+2778+109\cdot 7^2\cdot t-100\cdot 7^4\cdot t^2	-4\cdot 7^6\cdot t^3+7^7\cdot t^4	\quad	\big)
		\\
		&+{p_2} \big(\quad -12\cdot t^{-1}+2682
		+3938\cdot 7\cdot t
		-562\cdot 7^3\cdot t^2
		-143\cdot 7^5\cdot t^3
		-5\cdot 7^7\cdot t^4 		\quad	\big)
		\\	
		U_7(A{p_2}t^{-5})
		&=\quad \quad \  800-2076\cdot 7^2\cdot t+752\cdot 7^4\cdot t^2+268\cdot 7^6\cdot t^3+68\cdot 7^7\cdot t^4   \\
		&+{p_1} \big( \quad 100\cdot t^{-1}-14268+596 \cdot 7^3\cdot t+1602\cdot 7^4\cdot t^2	+58\cdot 7^6\cdot t^3-26\cdot 7^7\cdot t^4	\quad	\big)
		\\
		&+{p_2} \big(\quad 100\cdot t^{-1}-1924\cdot 7
		+1842\cdot 7^2\cdot t
		+15860\cdot 7^3\cdot t^2
		+2304\cdot 7^5\cdot t^3
		+8\cdot 7^8\cdot t^4-26\cdot 7^8\cdot t^5
		-7^{10}\cdot t^6 		\quad	\big)
		\\	
		U_7(A{p_2}t^{-6})
		&=\quad \quad \  -776\cdot 7+4444\cdot 7^2\cdot t-12960\cdot 7^4\cdot t^2-3744\cdot 7^6\cdot t^3-274\cdot 7^8\cdot t^4-96\cdot 7^9\cdot t^5-4\cdot 7^{11}\cdot t^6   \\
		&+{p_1} \big( \quad -97\cdot 7\cdot t^{-1}+722\cdot 7^2-77076\cdot 7^2\cdot t-20798\cdot 7^4\cdot t^2-1175\cdot 7^6\cdot t^3-3\cdot 7^9\cdot t^4-44\cdot 7^9\cdot t^5-2\cdot 7^{11}\cdot t^6
		\quad	\big)
		\\
		&+{p_2} \big(\quad -97\cdot 7\cdot t^{-1}+4278\cdot 7-499047\cdot 7\cdot t-33272\cdot 7^4\cdot t^2-32808\cdot 7^5\cdot t^3-219\cdot 7^8\cdot t^4-241\cdot 7^8\cdot t^5
		\\
		&\quad \quad \quad
		-22\cdot 7^{10}\cdot t^6-7^{12}\cdot t^7	
		\quad	\big)
	\end{align*}
	Group IV
	\begin{align*}
		U_7(1) &= 1 & & \\
		U_7(t^{-1}) &= -4 - 7\cdot t & & \\
		U_7(t^{-2}) &= 20 - 7^3 \cdot t^2 & &\\
		U_7(t^{-3}) &= -88 - 7^5 \cdot t^3 & & \\
		U_7(t^{-4}) &= 260 - 7^7 \cdot t^4 & & \\
		U_7(t^{-5}) &= 68 \cdot 7 - 7^9\cdot t^5 & & \\
		U_7(t^{-6}) &= -2392 \cdot 7 \cdot - 7^{11}\cdot t^6 & &
	\end{align*}
	Group V
	\begin{align*}
		U_7(p_1) &= 4 + 1672 \cdot 7 \cdot t + 2320 \cdot 7^3 \cdot t^2 +920\cdot7^5 \cdot t^3+ 144 \cdot 7^7 \cdot t^4 + 8 \cdot 7^9 \cdot t^5 \\
		&\quad + p_1 (83 \cdot 7 \cdot t + 454 \cdot 7^3 \cdot t^2 + 407 \cdot 7^5 \cdot t^3 + 134 \cdot 7^7 \cdot t^4 + 19 \cdot 7^9 \cdot t^5 + 7^{11} \cdot t^6) \\
		&\quad + p_2 (-4\cdot t-34 \cdot 7^3 \cdot t^2 - 327 \cdot 7^4 \cdot t^3 - 18 \cdot 7^7 \cdot t^4 - 19 \cdot 7^8 \cdot t^5 - 7^{10} \cdot t^6) \\[1ex]
		U_7(p_1 t^{-1}) &= 8 \\
		&\quad + p_1 (-7 \cdot t) \\
		&\quad + p_2 (-7 \cdot t - 7^2 \cdot t^2) \\[1ex]
		U_7(p_1 t^{-2}) &= -12 \\
		&\quad + p_1 (-7^3 \cdot t^2) \\
		&\quad + p_2 (-7^3 \cdot t^2 - 7^4 \cdot t^3) \\[1ex]
		U_7(p_1 t^{-3}) &= 80 - 64 \cdot 7^2 \cdot t - 32 \cdot 7^4 \cdot t^2 - 4 \cdot 7^6 \cdot t^3 \\
		&\quad + p_1 (1 + 6 \cdot 7^2 \cdot t + 6 \cdot 7^4 \cdot t^2 + 6 \cdot 7^5 \cdot t^3) \\
		&\quad + p_2 (4 \cdot 7 \cdot t + 3 \cdot 7^4 \cdot t^2 + 8 \cdot 7^5 \cdot t^3 + 6 \cdot 7^6 \cdot t^4) \\[1ex]
		U_7(p_1 t^{-4}) &= -96 \cdot 7 + 384 \cdot 7^2 \cdot t + 192 \cdot 7^4 \cdot t^2 + 24 \cdot 7^6 \cdot t^3 \\
		&\quad + p_1 (-6 - 36 \cdot 7^2 \cdot t - 36 \cdot 7^4 \cdot t^2 - 6 \cdot 7^6 \cdot t^3 - 7^7 \cdot t^4) \\
		&\quad + p_2 (-24 \cdot 7 \cdot t - 18 \cdot 7^4 \cdot t^2 - 54 \cdot 7^5 \cdot t^3 - 7^8 \cdot t^4 - 7^8 \cdot t^5) \\[1ex]
		U_7(p_1 t^{-5}) &= 752 \cdot 7 - 192 \cdot 7^3 \cdot t - 96 \cdot 7^5 \cdot t^2 - 12 \cdot 7^7 \cdot t^3 \\
		&\quad + p_1 (3 \cdot 7 + 18 \cdot 7^3 \cdot t + 18 \cdot 7^5 \cdot t^2 + 3 \cdot 7^7 \cdot t^3 - 7^9 \cdot t^5) \\
		&\quad + p_2 (12 \cdot 7^2 \cdot t + 9 \cdot 7^5 \cdot t^2 + 27 \cdot 7^6 \cdot t^3 + 3 \cdot 7^8 \cdot t^4 - 7^9 \cdot t^5 - 7^{10} \cdot t^6) \\[1ex]
		U_7(p_1 t^{-6}) &= -108 \cdot 7^3 - 128 \cdot 7^3 \cdot t - 64 \cdot 7^5 \cdot t^2 - 8 \cdot 7^7 \cdot t^3 \\
		&\quad + p_1 (2 \cdot 7 + 12 \cdot 7^3 \cdot t + 12 \cdot 7^5 \cdot t^2 + 2 \cdot 7^7 \cdot t^3 - 7^{11} \cdot t^6) \\
		&\quad + p_2 (8 \cdot 7^2 \cdot t + 6 \cdot 7^5 \cdot t^2 + 18 \cdot 7^6 \cdot t^3 + 2 \cdot 7^8 \cdot t^4 - 7^{11} \cdot t^6 - 7^{12} \cdot t^7)
	\end{align*}
	Group VI
	\begin{align*}
		U_7(p_2) &= -4 \\
		&\quad + p_2 \cdot t \\[1ex]
		U_7(p_2 t^{-1}) &= 16 \\
		&\quad + p_2 (7^2 \cdot t^2) \\[1ex]
		U_7(p_2 t^{-2}) &= -76 \\
		&\quad + p_2 (7^4 \cdot t^3) \\[1ex]
		U_7(p_2 t^{-3}) &= 304 + 64 \cdot 7^2 \cdot t + 32 \cdot 7^4 \cdot t^2 + 4 \cdot 7^6 \cdot t^3 \\
		&\quad + p_1 (-1 - 6 \cdot 7^2 \cdot t - 6 \cdot 7^4 \cdot t^2 - 7^6 \cdot t^3) \\
		&\quad + p_2 (-4 \cdot 7 \cdot t - 3 \cdot 7^4 \cdot t^2 - 9 \cdot 7^5 \cdot t^3 - 6 \cdot 7^6 \cdot t^4) \\[1ex]
		U_7(p_2 t^{-4}) &= -88 \cdot 7 - 128 \cdot 7^3 \cdot t - 64 \cdot 7^5 \cdot t^2 - 8 \cdot 7^7 \cdot t^3 \\
		&\quad + p_1 (2 \cdot 7 + 12 \cdot 7^3 \cdot t + 12 \cdot 7^5 \cdot t^2 + 2 \cdot 7^7 \cdot t^3) \\
		&\quad + p_2 (8 \cdot 7^2 \cdot t + 6 \cdot 7^5 \cdot t^2 + 18 \cdot 7^6 \cdot t^3 + 2 \cdot 7^8 \cdot t^4 + 7^8 \cdot t^5) \\[1ex]
		U_7(p_2 t^{-5}) &= -104 \cdot 7^2 + 1216 \cdot 7^3 \cdot t + 608 \cdot 7^5 \cdot t^2 + 76 \cdot 7^7 \cdot t^3 \\
		&\quad + p_1 (-19 \cdot 7 - 114 \cdot 7^3 \cdot t - 114 \cdot 7^5 \cdot t^2 - 19 \cdot 7^7 \cdot t^3) \\
		&\quad + p_2 (-76 \cdot 7^2 \cdot t - 57 \cdot 7^5 \cdot t^2 - 171 \cdot 7^6 \cdot t^3 - 19 \cdot 7^8 \cdot t^4 + 7^{10} \cdot t^6) \\[1ex]
		U_7(p_2 t^{-6}) &= 12588 \cdot 7 - 9288 \cdot 7^3 \cdot t - 4432 \cdot 7^5 \cdot t^2 - 464 \cdot 7^7 \cdot t^3 + 144 \cdot 7^8 \cdot t^4 + 8 \cdot 7^{10} \cdot t^5 \\
		&\quad + p_1 (149 \cdot 7 + 905 \cdot 7^3 \cdot t + 958 \cdot 7^5 \cdot t^2 + 207 \cdot 7^7 \cdot t^3 + 134 \cdot 7^8 \cdot t^4 + 19 \cdot 7^{10} \cdot t^5 + 7^{12} \cdot t^6) \\
		&\quad + p_2 (1 + 85 \cdot 7^3 \cdot t + 3092 \cdot 7^4 \cdot t^2 + 1293 \cdot 7^6 \cdot t^3 + 916 \cdot 7^7 \cdot t^4 - 19 \cdot 7^9 \cdot t^5 - 7^{11} \cdot t^6 + 7^{12} \cdot t^7)
	\end{align*}
}

\bibliographystyle{plain}
\bibliography{cpsi4012}

\end{document}